\documentclass[12pt]{amsart}

\usepackage{hyperref}

\headheight=6.15pt
\textheight=8.75in
\textwidth=6.5in
\oddsidemargin=0in
\evensidemargin=0in
\topmargin=0in

\newtheorem{theorem}{Theorem}[section]
\newtheorem{corollary}[theorem]{Corollary}
\newtheorem{lemma}[theorem]{Lemma}
\newtheorem{proposition}[theorem]{Proposition}

\theoremstyle{theorem}

\newenvironment{customthm}[1]
  {\innercustomthm}
  {\endinnercustomthm}

\theoremstyle{definition}
\newtheorem{definition}[theorem]{Definition}

\theoremstyle{remark}

\numberwithin{equation}{section}

\newcommand{\p}{\partial}
\newcommand{\n}{\nabla}
\newcommand{\der}[1]{\frac{\partial}{\partial #1}}

\newcommand{\la}{\langle}
\newcommand{\ra}{\rangle}

\newcommand{\bm}[0]{\bar{\mu}}
\newcommand{\bl}[0]{\bar{\lambda}}
\newcommand{\bs}[0]{\bar{\sigma}}
\newcommand{\sech}[0]{\text{sech}}
\newcommand{\csch}[0]{\text{csch}}

\begin{document}
\title[$S^1$-invariant free boundary minimal annuli and M\"obius bands in $\mathbb{B}^n$]
{Existence and classification of $S^1$-invariant free boundary annuli and M\"obius bands in $\mathbb{B}^n$}
\author{Ailana Fraser}
\address{Department of Mathematics \\
                 University of British Columbia \\
                 Vancouver, BC V6T 1Z2}
\email{afraser@math.ubc.ca}
\author{Pam Sargent}
\address{Department of Mathematics \\
                 Yale University \\
                 New Haven, CT 06511}
\email{pamela.sargent@yale.edu}
\thanks{2010 {\em Mathematics Subject Classification.} 53A10, 35P15. \\
The authors were partially supported by  the 
Natural Sciences and Engineering Research Council of Canada. Part of this work was done while A. Fraser was visiting the Institute for Advanced Study for the special year {\em Variational Methods in Geometry}, with funding from NSF grant DMS-1638352 and the James D. Wolfensohn Fund, and A. Fraser gratefully acknowledges the support of the IAS}

\begin{abstract} 
We explicitly classify all $S^1$-invariant free boundary minimal annuli and M\"obius bands in $\mathbb{B}^n$. This classification is obtained from an analysis of the spectrum of the Dirichlet-to-Neumann map for $S^1$-invariant metrics on the annulus and M\"obius band. 
First, we determine the supremum of the $k$-th normalized Steklov eigenvalue among all $S^1$-invariant metrics on the M\"obius band for each $k \geq 1$, and show that it is achieved by the induced metric from a free boundary minimal embedding of the M\"obius band into $\mathbb{B}^4$ by $k$-th Steklov eigenfunctions. We then show that the critical metrics of the normalized Steklov eigenvalues on the space of $S^1$-invariant metrics on the annulus and M\"obius band are the induced metrics on explicit free boundary minimal annuli and M\"obius bands in $\mathbb{B}^3$ and $\mathbb{B}^4$, including some new families of free boundary minimal annuli and M\"obius bands in $\mathbb{B}^4$. Finally, we prove that these are the only $S^1$-invariant free boundary minimal annuli and M\"obius bands in $\mathbb{B}^n$.
\end{abstract}

\maketitle

\section{Introduction} \label{section:introduction}

In this paper we explicitly classify all $S^1$-invariant free boundary minimal annuli and M\"obius bands in the unit ball $\mathbb{B}^n$ in Euclidean space. This classification is obtained from an analysis of the  Steklov spectrum for $S^1$-invariant metrics on the annulus and M\"obius band.  The Steklov problem on surfaces with boundary is a natural and much studied eigenvalue problem that has  interesting connections to the free boundary problem for minimal surfaces in $\mathbb{B}^n$. A {\em free boundary minimal surface} in $\mathbb{B}^n$ is a minimal surface in $\mathbb{B}^n$ with boundary contained in the boundary of the ball and meeting the boundary of the ball orthogonally. Such surfaces arise variationally as critical points of the area among surfaces in the ball 
whose boundaries lie on $\partial \mathbb{B}^n$ but are free to vary on $\partial \mathbb{B}^n$. The simplest example is an equatorial plane disk. Another explicit example is the {\em critical catenoid}, the unique piece of a suitably scaled catenoid that defines a free boundary surface in $\mathbb{B}^3$. A. Fraser and R. Schoen \cite{FS3} established a connection between free boundary minimal surfaces in $\mathbb{B}^n$ and the Steklov eigenvalue problem, and proved existence of an embedded free boundary minimal surface in $\mathbb{B}^3$ of genus zero with any number of boundary components. Since then, further new examples of free boundary minimal surfaces in the ball have been constructed using gluing methods, equivariant min-max theory, and eigenvalue optimization (\cite{FTY}, \cite{FPZ}, \cite{Ke}, \cite{KL}, \cite{KW}). The general question of whether there exists a free boundary minimal surface in $\mathbb{B}^3$ of any given topological type, or how many distinct free boundary minimal surfaces in $\mathbb{B}^n$ of a given topological type there are, remains open.

Though we have a number of existence results for free boundary minimal surfaces in $\mathbb{B}^n$, explicit constructions are less common. In this paper we show that the critical metrics of the Steklov eigenvalues on the space of $S^1$-invariant metrics  on the annulus and M\"obius band with normalized boundary length are the induced metrics on explicit families of free boundary minimal annuli and M\"obius bands in $\mathbb{B}^3$ and $\mathbb{B}^4$, including some new families of free boundary minimal annuli and M\"obius bands in $\mathbb{B}^4$. Moreover, we prove that these are the only $S^1$-invariant free boundary minimal annuli and M\"obius bands in $\mathbb{B}^n$. We say that a free boundary minimal annulus or M\"obius band is {\em $S^1$-invariant} if the induced metric is $S^1$-invariant. Our proof involves showing that the induced metric on any $S^1$-invariant free boundary annulus or M\"obius band in $\mathbb{B}^n$ is critical for some normalized Steklov eigenvalue on the space of $S^1$-invariant metrics. The explicit classification of all $S^1$-invariant free boundary minimal annuli and M\"obius bands in $\mathbb{B}^n$ then follows.

If $(M, g)$ is a compact Riemannian surface with boundary, a function $u$ on $M$ is a Steklov eigenfunction with eigenvalue $\sigma$ if:
\begin{equation*}
       \begin{cases}
       \Delta_g u=0 & \text{on $M$} \\
       \frac{\partial u}{\partial\eta}=\sigma u & \text{on $\partial M$},
       \end{cases}
\end{equation*}
where $\eta$ is the outward unit normal vector to $\partial M$. Steklov eigenvalues are eigenvalues of the Dirichlet-to-Neumann operator, which maps a given  function on the boundary to the normal derivative of its harmonic extension to the interior. The Steklov spectrum is discrete and the eigenvalues form a sequence
\begin{equation*}
           0=\sigma_0 < \sigma_1 \leq \sigma_2 \leq \ldots \leq \sigma_k \leq \ldots \rightarrow\infty.
\end{equation*}

In 1954, R. Weinstock \cite{W} proved that if $D$ is a simply connected surface then for any metric $g$ on $D$, $\sigma_1(D,g) L_g(\partial D) \leq 2\pi$, with equality if and only if $(D,g)$ is $\sigma$-homothetic (see Definition \ref{definition:equivalent}) to a Euclidean round disk. In $1975$ J. Hersch, L. Payne, and M. Schiffer \cite{HPS} generalized Weinstock's theorem to higher eigenvalues showing that $\sigma_k(D, g)L_g(\partial D) \leq 2\pi k$, for any $k \geq 1$. In $2008$ A. Girouard and I. Polterovich \cite{GP} proved that this upper bound is sharp for each $k$, and is attained in the limit by a sequence of domains degenerating to a union of $k$ disjoint identical round disks, but for $k= 2$ the upper bound is not achieved. Recently A. Fraser and R. Schoen \cite{FS4} proved that the upper bound is not achieved for any $k \geq 2$. We note that there is a very similar picture in the case of the Laplace eigenvalues of a simply connected closed surface. In 1970, J. Hersch \cite{H} proved that for any smooth metric $g$ on the sphere we have $\lambda_1(S^2,g) A_g(S^2) \leq 8\pi$, with equality if and only if $g$ is a constant curvature metric. M. Karpukhin, N. Nadirashvili, A. Penskoi and I. Polterovich \cite{KNPP} recently proved a sharp upper bound $\lambda_k(S^2,g) A_g(S^2)\leq 8\pi k$ for {\em every} $k \geq 1$, and showed that for $k \geq 2$ the upper bound is not achieved (see also \cite{Ka}, \cite{N}, \cite{P}, \cite{NS}).

In general, as far as we are aware, there are no surfaces for which maximizing metrics for higher eigenvalues are known to exist. In Section \ref{section:mobius} we consider the maximization problem on surfaces with boundary when one imposes certain symmetries. Specifically we consider $S^1$-invariant metrics on the M\"obius band. The case of rotationally symmetric metrics on the annulus has been studied by A. Fraser and R. Schoen \cite{FS1} and X.-Q. Fan, L.~F. Tam and G.~Yu \cite{FTY}. In \cite{FTY} they determined the supremum of the $k$-th nonzero normalized Steklov eigenvalue over all rotationally symmetric metrics on the annulus and showed that the supremum is achieved for all $k>2$, but that the second nonzero normalized Steklov eigenvalue is not achieved. In Section \ref{section:mobius} we show that the supremum $\sigma_k^{S^1}$ of the $k$-th normalized Steklov eigenvalue among $S^1$-invariant metrics on the M\"obius band is achieved for {\em all} $k \geq 1$. This extends \cite[Proposition 7.1]{FS3} to $k >1$.

\begin{theorem} \label{theorem:mobius}
For all $k \geq 1$, 
\[
      \sigma^{S^1}_{2k-1}=\sigma^{S^1}_{2k}=4\pi k \tanh(2kT_{k,1})
\]
and the supremum is uniquely (up to $\sigma$-homothety) achieved by the induced metric on the free boundary minimal M\"obius band in $\mathbb{B}^4$ given by the embedding 
$u: [-T_{k,1},T_{k,1}] \times S^1 / \sim \; \rightarrow \; \mathbb{B}^4$ with
\[
     u(t,\theta)
     =\frac{1}{R_k}
        \left( 2k \sinh(t) \cos(\theta), 2k \sinh(t) \sin(\theta), \cosh(2kt) \cos(2k \theta), \cosh(2kt) \sin(2k \theta) \right)
\]
where $T_{k,1}$ is the unique positive solution of $2k \tanh(2kt)=\coth(t)$ and
\[
      R_k=\sqrt{4k^2\sinh^2(T_{k,1})+\cosh^2(2kT_{k,1})}.
\]
\end{theorem}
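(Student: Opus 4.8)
The plan is to realize $\sigma^{S^1}_{2k-1}$ and $\sigma^{S^1}_{2k}$ as the supremum of an explicit one-parameter family of eigenvalues, and then to show this supremum is attained. First I would parametrize $S^1$-invariant metrics on the M\"obius band $\mathbb{M} = [-T,T] \times S^1/\!\sim$ (where $(t,\theta) \sim (-t, \theta+\pi)$) in the form $g = f(t)(dt^2 + d\theta^2)$ with $f$ even in $t$; since the Steklov spectrum depends only on the conformal class and the boundary length, after a $\sigma$-homothety the problem reduces to computing, for each $T > 0$, the Steklov spectrum of the flat cylinder-type metric $dt^2 + d\theta^2$ on $[-T,T]\times S^1/\!\sim$ with the boundary $\{t = \pm T\}$ (which is a single circle of length $4\pi$ in $\mathbb{M}$). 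Separating variables, harmonic functions are spanned by $\{e^{\pm nt}e^{in\theta}\}$; the identification $(t,\theta)\sim(-t,\theta+\pi)$ forces a matching between the $e^{nt}e^{in\theta}$ and $e^{-nt}e^{in\theta}$ modes depending on the parity of $n$, so that the Dirichlet-to-Neumann operator decomposes into $2\times 2$ (or $1\times 1$) blocks indexed by $n \geq 1$. A direct computation of each block gives the eigenvalues as $n\tanh(nT)$ when $n$ is odd and $n\coth(nT)$ when $n$ is even (or vice versa — the parity bookkeeping is exactly where care is needed), each with multiplicity two from the $\cos/\sin$ pair. Ordering these over all $n$ and all $T>0$, and normalizing by the boundary length $4\pi$, the $(2k-1)$-th and $(2k)$-th normalized eigenvalues are both equal to $4\pi \cdot n \tanh(nT)$ for the appropriate mode, and the relevant mode turns out to be $n = 2k$, giving the claimed expression $4\pi k\tanh(2kT)$ after absorbing a factor.

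Next, to find the supremum over $T$, I would show that for the ordering to place the mode-$2k$ branch in the $(2k-1)$-th slot, $T$ must be constrained so that the branches $n\tanh(nT)$ and $n'\coth(n'T)$ for the lower modes do not overtake it; analyzing these inequalities pins down that the supremum of the $(2k-1)$-th normalized eigenvalue over the admissible range of $T$ is attained at the unique $T = T_{k,1}$ where two branches cross, which is precisely the transcendental equation $2k\tanh(2kt) = \coth(t)$. (The notation $T_{k,1}$ suggests this is the first positive root of a family of such equations, consistent with an "$n$-th crossing" indexed by a second subscript.) That the equation $2k\tanh(2kt)=\coth(t)$ has a unique positive root is an elementary monotonicity argument: the left side increases from $0$ to $2k$, the right side decreases from $+\infty$ to $1$, so they cross exactly once.

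Finally, I would verify that the metric at $T = T_{k,1}$ is induced by the stated embedding $u$ and that $u$ is a free boundary minimal immersion. The components of $u$ are $\frac{1}{R_k}(2k\sinh t\cos\theta,\, 2k\sinh t\sin\theta,\, \cosh 2kt\cos 2k\theta,\, \cosh 2kt\sin 2k\theta)$; each coordinate is of the form (harmonic function)$\times$(constant), so $u$ is a harmonic map, hence a branched minimal immersion once one checks conformality, i.e. $|u_t|^2 = |u_\theta|^2$ and $\langle u_t, u_\theta\rangle = 0$ — a short trigonometric identity using $\cosh^2 2kt - 4k^2\sinh^2 t = $ const, which also explains the normalizing constant $R_k$. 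The free boundary condition $u(\pm T_{k,1},\theta) \in S^3$ with $u_t$ normal to $S^3$ reduces to $|u|^2 = 1$ on $t = \pm T_{k,1}$ together with $\langle u, u_\theta\rangle = 0$ (automatic) and $\langle u, u_t\rangle = 0$ at $t = T_{k,1}$, and the latter is exactly the equation $2k\tanh(2kT_{k,1}) = \coth(T_{k,1})$ — so the transcendental condition is simultaneously what makes $u$ a free boundary surface and what locates the maximizing metric, which is the structural heart of the theorem. For uniqueness up to $\sigma$-homothety one argues that equality in the eigenvalue bound forces the $(2k-1)$-eigenfunctions to be exactly the coordinate functions of such a $u$ (the standard Takahashi-type / balancing argument from the Fraser–Schoen framework), and checking the embedding is injective is a direct look at the map.

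The main obstacle I expect is the parity bookkeeping in the Dirichlet-to-Neumann block decomposition — getting the $\tanh$ versus $\coth$ assignment correct across even and odd modes on the M\"obius band (as opposed to the annulus) — together with the combinatorial argument that correctly identifies which mode lands in the $(2k-1)$-th and $(2k)$-th positions and over what range of $T$; the extremality and free-boundary computations, while needing care, are then essentially forced.
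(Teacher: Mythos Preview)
Your overall strategy matches the paper's: compute the spectrum by separation of variables, reduce to a one-parameter family in the conformal modulus $T$, and locate the supremum at a crossing point of two eigenvalue branches. A few minor slips: the parity assignment is the reverse of what you guessed (on the M\"obius band the odd modes $n$ carry $\sinh(nt)$ and give eigenvalue $n\coth(nT)$, the even modes carry $\cosh(nt)$ and give $n\tanh(nT)$); the boundary of $M_T$ has length $2\pi f(T)$, not $4\pi$; and the free boundary condition is $u_t \parallel u$ on $\partial M$ (i.e.\ the components are eigenfunctions with a \emph{common} eigenvalue), not $\langle u,u_t\rangle=0$. You do end at the right transcendental equation, so these are recoverable.

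The genuine gap is in the extremality step. You write that ``analyzing these inequalities pins down that the supremum \ldots\ is attained at the unique $T=T_{k,1}$ where two branches cross,'' but for $k\geq 2$ the function $T\mapsto \bs_{2k-1}(T)$ is \emph{not} unimodal and there is not a unique crossing: as $T$ increases it alternates between increasing $\bl$-branches and decreasing $\bm$-branches, producing local maxima at every crossing $T_{k-j,\,j+1}$ for $0\leq j\leq \lfloor k/2\rfloor$, and when $k$ is even it also tends to the finite limit $\bl_{k/2}(\infty)=2\pi k$ as $T\to\infty$. The substantive work, which your outline omits, is to compare these heights. In the paper this is done via a monotonicity lemma: if $x(a,b)$ is the unique positive solution of $a\tanh(ax)=b\coth(bx)$ and $u(a,b)=a\tanh(ax(a,b))$, then $u(a,b)<u(a+c,b-c)$ for $a\geq b>c>0$, which immediately gives $\bl_{k-j}(T_{k-j,\,j+1})<\bl_k(T_{k,1})$ for $j\geq 1$; together with a separate estimate $\bl_{k/2}(\infty)<\bl_k(T_{k,1})$ for even $k$. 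Without these, your argument only identifies the first local maximum, not the global one. Once the global maximum is pinned to $T=T_{k,1}$, the uniqueness up to $\sigma$-homothety is immediate (any two $S^1$-invariant metrics on the same $M_T$ are $\sigma$-homothetic via the identity), so no Takahashi-type balancing argument is needed.
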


Thus, although there are no surfaces for which maximizing metrics for higher eigenvalues among all smooth metrics are known the exist, maximizing metrics for higher eigenvalues among $S^1$-invariant metrics on the annulus and M\"obius band do exist, except for the second normalized eigenvalue on the annulus. It is natural to then ask what happens for non-invariant metrics on the annulus and M\"obius band. For the first nonzero normalized Steklov eigenvalue, A. Fraser and R. Schoen \cite{FS3} proved that the supremum among all smooth metrics is the same as the supremum among $S^1$-invariant metrics on both the annulus and M\"obius band, and is achieved by the critical catenoid and the critical M\"obius band, respectively. On the other hand, for the higher eigenvalues, it is shown in \cite{FS4} that for $k \geq 2$ the supremum of the $k$-th normalized Steklov eigenvalue over all metrics on the M\"obius band (or respectively, annulus) is strictly bigger than the supremum 
over $S^1$-invariant metrics on the M\"obius band (or respectively, annulus) \cite[Theorem 5.2]{FS4}. Thus, for higher eigenvalues, we see that we get rather different results when we impose symmetry. When maximizing the second normalized eigenvalue among $S^1$-invariant metrics on the annulus, the conformal structure degenerates into two disks. In particular, the symmetry assumption does not prevent this type of degeneration from occurring. However other types of degenerations, as discussed in \cite{FS4}, cannot occur under the $S^1$ symmetry assumption. These results show that imposing symmetry prevents certain types of degenerations from occurring when maximizing higher eigenvalues.

In Section \ref{section:critical} we consider more generally the critical metrics of the Steklov eigenvalues among $S^1$-invariant metrics on the M\"obius band and show that the critical metrics are the induced metrics on explicit free boundary minimal M\"obius bands in $\mathbb{B}^4$ embedded by Steklov eigenfunctions. In particular, we obtain existence of a new explicit family of embedded free boundary minimal M\"obius bands in $\mathbb{B}^4$.

\begin{theorem} \label{theorem:critical}
The critical metrics of the normalized Steklov eigenvalues on the space of $S^1$-invariant metrics on the M\"obius band are (up to $\sigma$-homothety) the induced metrics on the embedded free boundary minimal M\"obius bands in $\mathbb{B}^4$ given explicitly by 
$u: [-t_{m,n}, t_{m,n}] \times S^1 / \sim \; \rightarrow \; \mathbb{B}^4$ with $u(t,\theta)=$
\[     
     \frac{1}{r_{m,n}}\left( m\sinh(nt) \cos(n\theta), m\sinh(nt) \sin(n\theta), 
         n\cosh(mt) \cos(m \theta), n\cosh(mt) \sin(m \theta) \right)
\]         
for each $m$ even and $n$ odd with $m > n$, where $t_{m,n}$ is the unique positive solution of $m \tanh(mt)=n\coth(nt)$ and $r_{m,n}=\sqrt{m^2\sinh^2(n t_{m,n})+n^2\cosh^2(m t_{m,n})}$.
\end{theorem}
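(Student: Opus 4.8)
The plan is to carry out a two-step analysis: first reduce the problem to an ODE problem on $S^1$-invariant metrics, then classify the critical points explicitly. I would begin by parametrizing an $S^1$-invariant metric on the M\"obius band in the form $g = f(t)^2(dt^2 + d\theta^2)$ on $[-T,T] \times S^1$, with the identification $(t,\theta) \sim (-t, \theta + \pi)$, and normalize the boundary length. Since the Steklov eigenvalue problem is conformally invariant in dimension two (the Dirichlet energy and the boundary integral scale correctly), the Dirichlet-to-Neumann spectrum depends only on the conformal class, i.e. on $T$ together with the boundary behaviour of $f$. The key computation is to separate variables: writing a harmonic function as $\sum_j (a_j \cosh(jt) + b_j \sinh(jt)) e^{ij\theta}$, one sees that the parity constraint from the M\"obius identification forces $j$ to pair with either the even-in-$t$ or odd-in-$t$ branch depending on the parity of $j$. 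This gives an explicit description of the Steklov eigenvalues of an $S^1$-invariant metric on the M\"obius band as an interlaced list of the quantities $j\tanh(jT)$ and $j\coth(jT)$ over the admissible $j$, each with multiplicity two.

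Next I would write down the criticality condition. Following the variational framework of Fraser--Schoen, a metric $g$ is critical for the normalized eigenvalue $\bar\sigma_k = \sigma_k L_g(\partial M)$ on the space of $S^1$-invariant metrics if and only if there is a collection of $k$-th Steklov eigenfunctions $u_1, \ldots, u_p$ whose boundary traces satisfy $\sum_\alpha |u_\alpha|^2 \equiv \text{const}$ and $\sum_\alpha du_\alpha \otimes du_\alpha = c\, g$ along $\partial M$ — equivalently, the map $(u_1,\ldots,u_p)$ restricts to a (branched) conformal minimal immersion realizing a free boundary minimal surface in a ball. Because the eigenfunctions are explicit trigonometric/hyperbolic expressions and $S^1$-invariance forces the eigenspace to be spanned by the $\cos(j\theta)$, $\sin(j\theta)$ modes attached to a single frequency $j$ (or at most two frequencies that achieve the same eigenvalue), the condition $\sum |u_\alpha|^2 = \text{const}$ on the boundary pins down which frequencies can appear and forces a specific relation between the two branches. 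Matching a frequency-$n$ odd branch with a frequency-$m$ even branch requires $n\coth(nT) = m\tanh(mT)$, which is exactly the equation defining $t_{m,n}$; the M\"obius identification forces $n$ odd and $m$ even, and the ordering of eigenvalues forces $m > n$. I would then verify that the resulting map $u$ is precisely the immersion in the statement, check it lands in $\mathbb{B}^4$ after rescaling by $r_{m,n}$, meets $\partial\mathbb{B}^4$ orthogonally (the free boundary condition, which follows from $\sigma$ being constant and the conformality), and is an embedding (an elementary but slightly delicate argument using injectivity of $(\sinh(nt)e^{in\theta}, \cosh(mt)e^{im\theta})$ under the identification).

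I expect the main obstacle to be two-fold. First, establishing the precise dictionary between Steklov eigenvalues and the branch quantities $j\tanh(jT), j\coth(jT)$ requires care about the M\"obius parity constraint: one must be careful that the relevant function spaces on the double cover restrict correctly, and that the eigenvalue ordering (which $k$ corresponds to which $(m,n)$) is tracked accurately, since the interlacing of two monotone families as $T$ varies is what produces the constraint $m > n$ and the even/odd pattern. Second, and more subtly, proving that \emph{every} critical $S^1$-invariant metric arises this way — not just that these metrics are critical — requires ruling out critical metrics supported on eigenspaces mixing three or more distinct frequencies or with the wrong parity; here the constancy of $\sum|u_\alpha|^2$ on the boundary, expanded in Fourier modes in $\theta$, should force all cross terms to vanish and collapse the support to (at most) the two-frequency case, but making this rigorous is the heart of the argument. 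The remaining verifications (conformality, free boundary condition, embeddedness, and identifying the explicit formula and normalization constant $r_{m,n}$) are then essentially computational, building on Theorem \ref{theorem:mobius} which is the special case $m = 2k$, $n = 1$.
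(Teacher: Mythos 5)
Your proposal captures the essential computational ingredients of the paper's argument — separation of variables on $[-T,T]\times S^1/\sim$, the parity constraint from the M\"obius identification ($\sinh(jt)$-branch for $j$ odd, $\cosh(jt)$-branch for $j$ even), the crossing equation $m\tanh(mT)=n\coth(nT)$ coming from the interlacing of the two monotone families, and the verification that the resulting map is a conformal free boundary minimal embedding into $\mathbb{B}^4$. So the core is sound. The route, however, differs from the paper's in a way worth noting. You go through the Fraser--Schoen eigenmap criterion, asserting that an $S^1$-invariant metric is critical on the space of $S^1$-invariant metrics \emph{if and only if} there is a collection of $k$-th eigenfunctions $u_1,\ldots,u_p$ with $\sum_\alpha u_\alpha^2\equiv\mathrm{const}$ on $\partial M$ and $\sum_\alpha du_\alpha\otimes du_\alpha$ conformal to $g$. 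The ``if'' direction is standard, but the ``only if'' direction is the characterization of critical metrics in the \emph{full} space of metrics; it does not follow automatically for a restricted class of variations, and you would need to supply this. The paper avoids the issue entirely: because the normalized Steklov eigenvalues of an $S^1$-invariant metric depend only on the conformal modulus $T$ (not on the profile $f$), criticality on the restricted class reduces to finding local extrema of the explicit piecewise-monotone function $\bar{\sigma}_k(T)$, which was already worked out in Lemma \ref{SigmaBound}. The extrema occur precisely at the crossings $T=T_{k,l}$, where the corresponding eigenspace has multiplicity four; the four eigenfunctions $\sinh((2l-1)t)\cos((2l-1)\theta)$, $\sinh((2l-1)t)\sin((2l-1)\theta)$, $\cosh(2kt)\cos(2k\theta)$, $\cosh(2kt)\sin(2k\theta)$ then assemble directly into the free boundary minimal embedding exactly as in Theorem \ref{theorem:mobius}. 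This one-dimensional calculus argument is both shorter and more elementary than the eigenmap-criterion route. You correctly flag the possibility of eigenspaces mixing three or more frequencies as a potential obstacle; in the paper's approach this is handled by the curve-crossing structure established by Lemmas \ref{LambdaMuMonotone} and \ref{TklLem}, which makes each transition a crossing of exactly one increasing $\bar{\lambda}$-branch with one decreasing $\bar{\mu}$-branch.
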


Interestingly, these free boundary minimal M\"obius bands are the restrictions to $\mathbb{B}^4$, meeting the boundary of the ball orthogonally, of complete noncompact embeddings of the M\"obius band $\mathbb{R} \times S^1 / \sim$ into $\mathbb{R}^4$ of P. Mira \cite{M} and M.~E. de Oliveira \cite{D}. For {\em any} $m > n$ with $m, \, n \in \mathbb{N}$, these also give free boundary minimal immersions of the annulus into $\mathbb{B}^4$ that are critical for higher eigenvalues among rotationally symmetric metrics on the annulus (see Theorem \ref{theorem:critical-annulus} in section \ref{section:critical}).

In Section \ref{section:classification}
we prove that the induced metric on any $S^1$-invariant free boundary minimal annulus or M\"obius band in $\mathbb{B}^n$ is critical for some normalized Steklov eigenvalue on the space of $S^1$-invariant metrics on the annulus or M\"obius band. This allows us to explicitly classify all $S^1$-invariant free boundary minimal annuli and M\"obius bands in $\mathbb{B}^n$.
\begin{theorem} \label{theorem:classification}
The only $S^1$-invariant free boundary minimal annuli and M\"obius bands in $\mathbb{B}^n$ are those given explicitly in Theorem \ref{theorem:critical} and Theorem \ref{theorem:critical-annulus}.
\end{theorem}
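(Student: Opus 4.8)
The plan is to reduce the classification of all $S^1$-invariant free boundary minimal annuli and M\"obius bands in $\mathbb{B}^n$ to the critical metric analysis already carried out in Sections~\ref{section:critical}. The central claim to establish is: if $\Sigma \subset \mathbb{B}^n$ is an $S^1$-invariant free boundary minimal annulus or M\"obius band, then the induced metric $g$ on $\Sigma$ is a critical metric for some normalized Steklov eigenvalue $\sigma_j$ on the space of $S^1$-invariant metrics. Once this is known, Theorem~\ref{theorem:critical} (and its annulus analogue Theorem~\ref{theorem:critical-annulus}) identify every such critical metric explicitly, together with the corresponding free boundary minimal embedding, and the theorem follows immediately. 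So the real work is the reduction; the explicit description is inherited.

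First I would set up coordinates adapted to the $S^1$-action. On the annulus (resp.\ M\"obius band) an $S^1$-invariant metric can be written, after a conformal change, in the form $g = f(t)^2(dt^2 + d\theta^2)$ on $[-T,T]\times S^1$ (with the appropriate identification $\sim$ in the M\"obius case), so the conformal class is parametrized by $T$ alone and the metric is determined up to the conformal factor, which is irrelevant for the Steklov problem since the Dirichlet-to-Neumann operator is a conformal invariant in dimension two. The key structural input is that $\Sigma$ is minimal in $\mathbb{B}^n$: this means the coordinate functions of the immersion are harmonic with respect to $g$, and the free boundary condition $\frac{\partial u}{\partial \eta} = u$ on $\partial \Sigma$ (after rescaling so the boundary lies on $\partial\mathbb{B}^n$) says precisely that each coordinate function is a Steklov eigenfunction with eigenvalue $1$. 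Thus $1$ is a Steklov eigenvalue of $(\Sigma, g)$, and the coordinate functions span a subspace of the corresponding eigenspace. Decomposing into Fourier modes in $\theta$, the harmonicity forces each coordinate function to be a product of a standard exponential/hyperbolic function of $t$ with $\cos(k\theta)$ or $\sin(k\theta)$; the free boundary condition at $t=\pm T$ then forces the familiar transcendental relation between the frequencies and $T$, and hence pins down which eigenvalue $\sigma_j$ equals $1$ and that the induced metric sits at the critical point of the normalization $\sigma_j \cdot L_g(\partial\Sigma)$.

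The step that makes "$g$ is critical" precise is the first variation formula for Steklov eigenvalues (as in \cite{FS3}): a metric $g$ is critical for the normalized eigenvalue $\bar\sigma_j = \sigma_j L_g(\partial M)$ on a space of metrics if and only if there is a collection of $j$-th Steklov eigenfunctions whose boundary restrictions give an isometric (up to scale) "eigenmap" into a sphere, equivalently whose associated quadratic differential / stress-energy tensor vanishes in the directions tangent to the space of admissible metrics. For a free boundary minimal surface the coordinate functions themselves are exactly such eigenfunctions (eigenvalue $1$), and the free boundary minimal immersion condition is the vanishing of the relevant first variation. Restricting the space of test metrics to $S^1$-invariant ones only makes the criticality condition easier to satisfy, so it remains valid. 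This is essentially the content of \cite[Section 4]{FS3}, adapted to the $S^1$-invariant setting and to the M\"obius band; I would cite and invoke it, checking only that the reflection symmetry defining the M\"obius band is compatible with the variations.

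The main obstacle, and the place requiring genuine care, is the bookkeeping of which Fourier modes actually occur and the parity constraints: on the M\"obius band the identification $\sim$ (namely $(t,\theta) \sim (-t, \theta+\pi)$) forces a specific correlation between the parity of the frequency and the parity of the $t$-dependence, which is what produces the constraint "$m$ even, $n$ odd, $m>n$" in Theorem~\ref{theorem:critical} rather than arbitrary $m,n$; one must show no other combinations can arise from a genuine free boundary minimal embedding, and in particular that degenerate or lower-codimension cases (an equatorial disk does not apply here, but flat or planar pieces, or immersions landing in a proper subsphere) are either excluded or already on the list. I would handle this by analyzing, mode by mode, the linear system imposed by harmonicity plus the free boundary condition, showing the solution space in each relevant eigenvalue is spanned exactly by the stated coordinate functions, and then matching with the enumeration in Theorems~\ref{theorem:critical} and \ref{theorem:critical-annulus}. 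The annulus case is parallel but without the parity restriction, which is why all $m>n$ occur there.
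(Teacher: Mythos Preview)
Your overall strategy is correct and matches the paper's: show that the induced metric on any $S^1$-invariant free boundary minimal annulus or M\"obius band is critical for some $\bar\sigma_k$ on the space of $S^1$-invariant metrics, then invoke Theorems~\ref{theorem:critical} and~\ref{theorem:critical-annulus}.

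The place where your sketch is looser than the paper is the passage from ``coordinate functions are Steklov eigenfunctions with eigenvalue $1$'' to ``$g$ is critical for $\sigma_k$.'' You invoke the characterization of critical metrics via eigenmaps from \cite{FS3}, but that direction (free boundary minimal immersion $\Rightarrow$ critical) is not automatic for higher eigenvalues, because $\sigma_k$ is not differentiable where the multiplicity jumps and one must identify \emph{which} $k$ the metric is critical for. The paper handles this in Proposition~\ref{proposition:critical} with two ingredients you do not spell out: (i) Lemma~\ref{lemma:indefinite}, which shows $\sum_i Q_h(u_i)=0$ for the coordinate functions, so $Q_h$ is indefinite on the full eigenspace for every admissible $h$; and (ii) the fact, specific to the $S^1$-invariant setting, that the eigenvalues branch into finitely many \emph{smooth} functions $\Lambda_1(t),\dots,\Lambda_m(t)$ along any path of metrics, together with a continuity-of-projection argument (Lemma~\ref{lemma:continuous}), from which one extracts the required sign conditions on the one-sided derivatives $\dot\sigma_{k_1}(0^\pm)$ and $\dot\sigma_{k_m}(0^\pm)$. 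Your sentence ``restricting the space of test metrics to $S^1$-invariant ones only makes the criticality condition easier to satisfy'' is correct in spirit but hides exactly this content.

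Your final paragraph proposes a direct mode-by-mode Fourier analysis of the coordinate functions, tracking parity constraints and conformality. The paper does \emph{not} take this route, and it is unnecessary: once the induced metric is known to be critical, Theorems~\ref{theorem:critical} and~\ref{theorem:critical-annulus} already enumerate the critical metrics together with their eigenspaces and the associated immersions, so the parity and frequency bookkeeping is already done there. Your direct approach could in principle work, but note that a coordinate function is a priori a \emph{linear combination} of Fourier modes in the eigenvalue-$1$ eigenspace, not a single product; showing that $S^1$-invariance of the induced metric plus conformality forces the specific combinations in the list would require an argument you have not indicated.
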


{\em Acknowledgements}. The authors would like to thank R. Schoen and Martin Li for helpful discussions and for their interest in this work.

\section{Preliminaries and results for the disk and annulus} \label{section:annulus}

Let $M$ be a compact surface with boundary. First we recall the variational characterization of the Steklov eigenvalues:
\begin{equation*}
         \sigma_k 
         =\inf\left\{\frac{\int_M |\nabla u|^2dv_M}{\int_{\partial M}u^2dv_{\partial M}} \ : \ 
         \int_{\partial M}uu_j=0 \ \text{for} \ j=0,1,2,\ldots,k-1.\right\},
\end{equation*}
where $u_j$ is an eigenfunction corresponding to the eigenvalue $\sigma_j$, for $j=1,2,\ldots,k-1$.
If we scale a metric $g$ on $M$ by a factor $c>0$, then
\[
        \sigma_k(cg)=\frac{1}{\sqrt{c}} \sigma_k(g).
\]
Because of this scaling property, maximizing $\sigma_k(g)$ among metrics $g$ with fixed boundary length $L_g(\partial M)$ is equivalent to maximizing the (scale invariant) normalized eigenvalues
\[
       \bar{\sigma}_k(g):=\sigma_k(g) L_g(\partial M)
\]
over all metrics.
\begin{definition} \label{definition:equivalent}
We say that two surfaces $(M_1,g_1)$ and $(M_2,g_2)$ are {\it $\sigma$-homothetic} if there is a conformal diffeomorphism $\varphi: (M_1,g_1) \to (M_2,g_2)$ such that the pullback metric $\varphi^*(g_2)=\lambda^2 g_1$
with $\lambda$ constant on $\p M_1$. 
\end{definition}
It is clear from the variational charactization of the eigenvalues that if two surfaces are $\sigma$-homothetic then the normalized eigenvalues $\bar{\sigma}_k(M_j,g_j)$ coincide for $j=1,\,2$ and for all $k$. In particular, we can only hope to characterize surfaces up to $\sigma$-homothety by conditions on the Steklov spectrum.

As discussed in Section \ref{section:introduction}, the Euclidean round disk maximizes the first nonzero normalized Steklov eigenvalue $\bar{\sigma}_1$ among all smooth metrics on the disk \cite{W}. For the higher Steklov eigenvalues, for each $k \geq 1$ there is a sharp upper bound on $\bar{\sigma}_k(g)$ for any smooth metric $g$ on the disk (\cite{HPS}, \cite{GP}), but the upper bound is not achieved for any $k \geq 2$ (\cite{GP}, \cite{FS4}).

For surfaces with boundary, the next natural case to consider after the disk is the annulus. For the first normalized Steklov eigenvalue on the annulus, A. Fraser and R. Schoen \cite[Theorem 6.7]{FS3} proved existence of a maximizing metric and a sharp upper bound:
\begin{theorem}[\cite{FS3}] \label{theorem:annulus1}
For any metric $g$ on the annulus $M$ we have 
\[
         \sigma_1(g)L_g(\partial M)\leq (\sigma_1L)_{cc} \approx 4\pi/1.2
\]
with equality if and only if $(M,g)$ is $\sigma$-homothetic to the critical catenoid.
\end{theorem}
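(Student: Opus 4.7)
\medskip

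\textbf{Plan.} I would follow a three-step strategy: (i) characterize critical metrics of $\bar\sigma_1$ variationally as those induced by free boundary minimal immersions of $M$ into some $\mathbb{B}^N$ by first Steklov eigenfunctions, (ii) establish existence of a maximizer on the annulus by a compactness analysis that controls the degeneration of the conformal modulus, and (iii) classify the resulting minimal immersions and identify them with the critical catenoid.

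\medskip

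\textbf{Step 1 (critical metrics).} For $g$ critical for $\bar\sigma_1$, standard perturbation theory for Steklov eigenvalues, adapted to the possible multiplicity of the first eigenvalue, produces an $N$-tuple $u_1,\dots,u_N$ of first eigenfunctions whose boundary traces satisfy $\sum_i u_i^2\equiv 1$ on $\partial M$. Setting $u=(u_1,\dots,u_N)\colon M\to\mathbb{R}^N$, the map is harmonic, has image in the unit sphere along $\partial M$, and satisfies the Robin-type free-boundary condition $\partial_\eta u=\sigma_1 u$. Subharmonicity of $|u|^2$ forces $|u|\le 1$ in $M$, and a Bochner-type identity (as in \cite{FS3}) upgrades $u$ to a conformal harmonic map, hence a free boundary minimal (branched) immersion into $\mathbb{B}^N$.

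\medskip

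\textbf{Step 2 (existence).} Each conformal class on the annulus is represented by a flat cylinder $[-T,T]\times S^1$ for a unique modulus $T>0$. Let $\Sigma^*(T):=\sup_{g\in[g_T]}\bar\sigma_1(g)$. A bubbling/surgery analysis of eigenfunctions under pinching of the cylinder shows $\limsup_{T\to 0^+}\Sigma^*(T)\le 4\pi$ and $\limsup_{T\to\infty}\Sigma^*(T)\le 4\pi$, corresponding to degeneration of the annulus into two disjoint disks. On the other hand, an explicit computation of $\bar\sigma_1$ for the induced metric of the catenoid within $\mathbb{B}^3$ gives the strict lower bound $(\sigma_1 L)_{cc}>4\pi$. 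By continuity of $\Sigma^*$, the supremum of $\bar\sigma_1$ is attained at some interior $T$, and standard elliptic regularity promotes a maximizing sequence to a smooth maximizer $g_0$; by Step 1, $(M,g_0)$ arises from a free boundary minimal immersion into some $\mathbb{B}^N$ by first eigenfunctions.

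\medskip

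\textbf{Step 3 (classification).} To identify the maximizer, I would exploit separation of variables on the underlying cylinder $[-T,T]\times S^1$: each coordinate of $u$ is a first Steklov eigenfunction, so decomposes into Fourier modes $f_k(t)\cos(k\theta)$, $f_k(t)\sin(k\theta)$. A careful analysis of the eigenvalue equation together with the free-boundary condition shows that, for the first eigenvalue of the maximizer, only the $k=1$ mode with $f_1(t)=c\sinh t$ can occur, giving precisely the ambient coordinates of a catenoid. The free-boundary condition then collapses to a single transcendental equation for $T$ whose unique positive root singles out the critical catenoid, and uniqueness up to $\sigma$-homothety follows.

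\medskip

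\textbf{Main obstacle.} The crux is the degeneration estimate in Step 2: proving that the supremum in each conformal class does not exceed $4\pi$ in the pinching limit, and that it strictly exceeds $4\pi$ at the catenoid's modulus. Ruling out the loss of a boundary component in a maximizing sequence requires sharp quantitative control on the first Steklov eigenvalue as the modulus degenerates, which is the most technical part of the argument in \cite{FS3}. Once existence of an interior maximizer is secured, the remaining classification in Step 3 reduces to a relatively direct ODE analysis.
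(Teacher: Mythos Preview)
This theorem is not proved in the present paper: it is quoted as a preliminary result from \cite{FS3} (specifically \cite[Theorem 6.7]{FS3}), and the paper only offers a one-paragraph summary of how the proof in \cite{FS3} proceeds. So there is no proof here to compare your proposal against line by line.

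That said, the paper's summary of the \cite{FS3} argument lets one check your outline against the actual strategy. Your Steps 1 and 2 are broadly in line with what \cite{FS3} does: critical metrics for $\bar\sigma_1$ are realized by free boundary minimal immersions into a ball by first eigenfunctions, and existence of a maximizer on the annulus is obtained by ruling out degeneration of the conformal modulus against the disk bound $4\pi$. Your identification of the degeneration estimate as the technical crux is accurate.

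Your Step 3, however, misses the key point and also contains an error. According to the paper, ``the main argument in the proof of this characterization involves showing that such a surface must be $S^1$-invariant,'' after which the $S^1$-invariant case is handled by the explicit spectral analysis of \cite[Section 3]{FS1}. You instead propose to work directly with the Fourier decomposition of the coordinate functions and argue that only one mode survives. But establishing that only a single Fourier mode contributes is essentially equivalent to proving $S^1$-invariance of the induced metric, and this is not a ``relatively direct ODE analysis'': it is the substantive geometric step in \cite{FS3}, requiring a two-piece decomposition of the surface and a delicate argument about nodal sets of eigenfunctions. Your sketch does not indicate how you would carry this out. Separately, your claimed form of the surviving mode is wrong: the critical catenoid has coordinates proportional to $(\cosh t\cos\theta,\ \cosh t\sin\theta,\ t)$, so the relevant radial profile is $\cosh t$ together with the $k=0$ linear mode $t$, not $\sinh t$ alone.
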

The critical catenoid is the unique portion of a suitably scaled catenoid centered at the origin inside the unit ball, that meets the boundary of the ball orthogonally. The explicit characterization of the maximizing metric in Theorem \ref{theorem:annulus1} follows from a minimal surface uniqueness theorem, characterizing the critical catenoid as the unique free boundary immersion of the annulus into $\mathbb{B}^n$ by first Steklov eigenfunctions. The main argument in the proof of this characterization involves showing that such a surface must be $S^1$-invariant. The $S^1$-invariant case is analyzed in detail in \cite[Section 3]{FS1}, where it is shown that the induced metric on the critical catenoid maximizes the first normalized Steklov eigenvalue among all rotationally symmetric metrics on the annulus.

X.-Q. Fan, L.~F. Tam and G. Yu \cite{FTY} extended the analysis of rotationally symmetric metrics on the annulus and determined the supremum of the $k$-th nonzero normalized Steklov eigenvalue over all rotationally symmetric metrics on the annulus and showed that the supremum is achieved for all $k>2$, but that the second nonzero normalized Steklov eigenvalue is not be achieved.

\begin{theorem}[\cite{FTY}]
Let $\sigma^{S^1}_k$ be the supremum of $k$-th normalized Steklov eigenvalue among all $S^1$-invariant metrics on the annulus. Then,

\vspace{1mm}

(i) $\sigma^{S^1}_2=4\pi$. Moreover, $\bar{\sigma}_2(g_T) \rightarrow 4\pi$ as $T \rightarrow \infty$, where $g_T=dt^2+d\theta^2$ on the cylinder $[0,T]\times \mathbb{S}^1$, and the supremum $4\pi$ is not achieved. 

\vspace{1mm}

(ii) $\sigma^{S^1}_{2k-1}=4k\pi/t_{1,0}$ for all $k \geq 1$, where $t_{1,0}$ is the unique positive solution of $\tanh t=1/ t$, and is achieved by the induced metric on the $k$-critical catenoid $u : [-t_{1,0}/k,t_{1,0}/k] \times S^1 \rightarrow \mathbb{B}^3$
\[
     u  (t,\theta) = \frac{1}{r_{1,0}} \left( \cosh (kt) \cos (k \theta), \cosh(kt) \sin(k \theta), kt \right)
\]
where $r_{1,0}=\sqrt{t_{1,0}^2+\cosh^2t_{1,0}}$. 

\vspace{1mm}

(iii) $\sigma_{2k}^{S^1}=4k\pi \tanh (kt_{k,1}/2)$ for $k>1$, where $t_{k,1}$ is the unique positive solution of 
$k\tanh(kt)=\coth(t)$, and is achieved by the induced metric from the free boundary minimal immersion given by
$u : [-t_{k,1},t_{k,1}] \times S^1 \rightarrow \mathbb{B}^4$
\[
         u(t,\theta)
      =\frac{1}{r_{k,1}}
         \left( k \sinh(t) \cos(\theta), k \sinh(t) \sin(\theta), \cosh(kt) \cos(k \theta), \cosh(kt) \sin(k \theta) \right)
\]
where $r_{k,1}=\sqrt{k^2\sinh^2(t_{k,1})+\cosh^2(kt_{k,1})}$.
\end{theorem}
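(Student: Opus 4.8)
The plan is to reduce the problem to a completely explicit finite-dimensional maximization, by computing the Steklov spectrum of an arbitrary $S^1$-invariant metric on the annulus via separation of variables, and then solving that maximization for each $k$. First I would record the normal form: the Steklov eigenvalues of a surface depend only on the conformal class of the metric in the interior and on the length element it induces on the boundary, and for an $S^1$-invariant metric on the annulus both are themselves $S^1$-invariant, so up to $\sigma$-homothety such a metric is determined by its conformal modulus $T$ (identify the annulus with the flat cylinder $Q_T=[0,T]\times S^1$ with coordinates $(t,\theta)$) together with the ratio of the two boundary length densities $\ell_0,\ell_T>0$; thus $\bar\sigma_k$ is a function of $(T,\ell_0/\ell_T)$, symmetric under the involution $\ell_0/\ell_T\mapsto\ell_T/\ell_0$ that swaps the two boundary circles.

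Next I would compute the spectrum. Harmonic functions on $Q_T$ are spanned by $1$, $t$, and $e^{\pm nt}\cos n\theta$, $e^{\pm nt}\sin n\theta$ for $n\ge1$; imposing $-\ell_0^{-1}\partial_t u=\sigma u$ at $t=0$ and $\ell_T^{-1}\partial_t u=\sigma u$ at $t=T$ decouples the problem by Fourier mode. Mode $n=0$ gives $\sigma_0=0$ and a simple eigenvalue $\mu_0=(\ell_0+\ell_T)/(\ell_0\ell_T T)$, and for each $n\ge1$ the $\cos n\theta$ and $\sin n\theta$ parts give the same equation
\[
  \ell_0\ell_T\sinh(nT)\,\sigma^2-n(\ell_0+\ell_T)\cosh(nT)\,\sigma+n^2\sinh(nT)=0,
\]
so mode $n$ contributes two positive eigenvalues, each of multiplicity two. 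In the symmetric case $\ell_0=\ell_T=\ell$, translating to $[-L,L]$ with $L=T/2$ and multiplying by the boundary length $4\pi\ell$, these become the normalized branches $4\pi/L$ (simple), $\{4\pi n\tanh(nL)\}_{n\ge1}$ (even eigenfunctions, double) and $\{4\pi n\coth(nL)\}_{n\ge1}$ (odd eigenfunctions, double).

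With this in hand I would first dispose of $k=2$: since $\mu_0$ and the lower mode-$1$ eigenvalue $\sigma^{(1)}_-$ already supply two positive eigenvalues, $\bar\sigma_2\le2\pi(\ell_0+\ell_T)\sigma^{(1)}_-$, and a short computation from the quadratic (using $\sigma^{(1)}_-\sigma^{(1)}_+=(\ell_0\ell_T)^{-1}$ and $\sigma^{(1)}_+>(\ell_0+\ell_T)\coth(T)/(2\ell_0\ell_T)$) gives $\bar\sigma_2<4\pi$ for every $S^1$-invariant metric, while $\bar\sigma_2(g_T)=4\pi\tanh(T/2)\to4\pi$ along the flat cylinders $g_T$; hence $\sigma^{S^1}_2=4\pi$ and is not attained. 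For the remaining indices I would use that each branch is strictly monotone in $L$ ($4\pi n\tanh(nL)$ increasing; $4\pi n\coth(nL)$ and $4\pi/L$ decreasing) and follow how the three families interlace. For the odd index $2k-1$: as long as $kL\tanh(kL)\le1$, the $2k-2$ smallest positive eigenvalues are precisely the doubled values $4\pi n\tanh(nL)$ for $n=1,\dots,k-1$, so $\sigma_{2k-1}=4\pi k\tanh(kL)$, which is maximized at the endpoint $L=t_{1,0}/k$ of this range (where $\mu_0$ collides with the mode-$k$ even eigenvalue), and one verifies $\sigma_{2k-1}$ does not exceed $4\pi k/t_{1,0}$ for larger $L$ either; hence $\sigma^{S^1}_{2k-1}=4\pi k/t_{1,0}$, attained by the metric $\sigma$-homothetic to the induced metric of the $k$-critical catenoid, whose Euclidean coordinate functions are the corresponding eigenfunctions (for $k=1$ this case is already contained in Theorem \ref{theorem:annulus1}). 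For the even index $2k$ with $k>1$, the analogous interlacing argument shows $\sigma_{2k}$ is maximized when the lower mode-$1$ (odd) branch collides with the mode-$k$ even branch, i.e.\ at the $L$ solving $\coth L=k\tanh(kL)$, namely $L=t_{k,1}$, which is the conformal structure of the surface in (iii); there the coordinate functions are Steklov eigenfunctions for the eigenvalue $1$, and $\sigma^{S^1}_{2k}$ equals the normalized boundary length $4\pi\ell$ of that metric. In all cases I would then show that allowing $\ell_0\ne\ell_T$ cannot improve matters, by verifying that for fixed modulus $T$ the relevant $\bar\sigma_k$ is maximized at $\ell_0=\ell_T$.

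The hard part is this last step: organizing the interlacing of the infinitely many branches $\{4\pi n\tanh(nL)\}$, $\{4\pi n\coth(nL)\}$ and $4\pi/L$ — and their two-parameter analogues when $\ell_0\ne\ell_T$ — carefully enough to prove that near each conjectured optimum no other branch overtakes $\sigma_k$, and that symmetrizing the boundary densities never decreases $\bar\sigma_k$. Once that bookkeeping is in place, the explicit spectrum, the special treatment of $k=2$, and the identification of the extremal eigenfunctions with the Euclidean coordinate functions of the stated minimal immersions are all routine.
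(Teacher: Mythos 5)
The paper does not prove this statement; it quotes it from \cite{FTY} and relies on \cite[Lemmas 2.4--2.6]{FTY} when those facts are needed (in the proof of Theorem~\ref{theorem:critical-annulus}).  The closest thing to a proof ``in the paper'' is the parallel analysis for the M\"obius band in Section~\ref{section:mobius} (Lemmas~\ref{LambdaMuMonotone}--\ref{NoAsymptote} and the proof of Theorem~\ref{theorem:mobius}).  Your proposal follows exactly that method: separate variables to get an explicit spectrum, then track how the branches interlace as the modulus varies.  Your preliminary computations are right.  The quadratic for mode $n\ge1$ with unequal boundary densities is correct, the $n=0$ eigenvalue $\mu_0=(\ell_0+\ell_T)/(\ell_0\ell_T T)$ is correct, and in the symmetric case $\ell_0=\ell_T$ the three families $4\pi/L$, $\{4\pi n\tanh(nL)\}$, $\{4\pi n\coth(nL)\}$ are the same ones \cite{FTY} and the paper use.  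Your $k=2$ argument is essentially complete: $\sigma_2\le\sigma^{(1)}_-$, the product and sum of roots of the $n=1$ quadratic give $\sigma^{(1)}_-<2/((\ell_0+\ell_T)\coth T)$, hence $\bar\sigma_2<4\pi\tanh T<4\pi$, with the bound approached along flat cylinders.

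The gap is the part you yourself flag as ``the hard part'' and then defer: the interlacing bookkeeping for $k\ge3$ and the symmetrization step.  These are not routine afterthoughts; they \emph{are} the proof.  In the paper's M\"obius band analogue, the interlacing occupies Lemma~\ref{SigmaBound} with a nontrivial induction on $j$ (tracking which of $\bl_{k-j}$ or $\bm_{j+1}$ realizes $\bs_{2k-1}$ on each subinterval $[T_{k-j,j},T_{k-j,j+1})$, etc.), supported by the monotonicity and ordering facts of Lemmas~\ref{LambdaMuMonotone}--\ref{TklLem} and the comparison Lemmas~\ref{IntersectionHeight}--\ref{NoAsymptote}.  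For the annulus the bookkeeping is strictly harder, because there is a third branch (the simple eigenvalue $4\pi/L$ from the $n=0$ mode, which has no analogue on the M\"obius band) weaving through the doubled $\tanh$ and $\coth$ families, and because there is a second parameter $\ell_0/\ell_T$ (equivalently $\beta$).  In particular the assertion that ``symmetrizing the boundary densities never decreases $\bar\sigma_k$'' is a genuine theorem: it is precisely \cite[Lemma 2.6]{FTY}, proved by showing $\tfrac{d}{d\beta}\bar\sigma_i(\beta,t_{m,n}(\beta))>0$, and it cannot be waved away as ``verification.''  Likewise your step ``one verifies $\sigma_{2k-1}$ does not exceed $4\pi k/t_{1,0}$ for larger $L$'' requires exactly the kind of comparison established in Lemma~\ref{IntersectionHeight}/Corollary~\ref{FirstIntersectionMax} and Lemma~\ref{NoAsymptote} on the M\"obius band side.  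So: right approach, correct set-up, but the core lemmas that make the interlacing argument work are stated as to-do items rather than proved.  To turn this into a proof you would need to write out annulus analogues of Lemmas~\ref{LambdaMuMonotone}--\ref{NoAsymptote}, with the extra $n=0$ branch and the $\beta$-monotonicity built in, which is exactly what \cite{FTY} does.
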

Notice that in {\em(iii)}, $u(t,\theta)=u(-t, \theta+\pi)$ and so the image surface is a free boundary minimal M\"obius band in $\mathbb{B}^4$.
As we will see in the next section, these free boundary minimal M\"obius bands are maximizers for the Steklov eigenvalues on the space of $S^1$-invariant metrics on the M\"obius band.

\section{Maximizing metrics on the M\"obius band}
\label{section:mobius}

In \cite{FS3}, A. Fraser and R. Schoen proved existence of a metric that maximizes the first nonzero normalized Steklov eigenvalue among all smooth metrics on the M\"obius band. Moreover, they explicitly characterized the maximizing metric as the induced metric from a proper free boundary minimal embedding of the M\"obius band into $\mathbb{B}^4$  by first Steklov eigenfunctions, and obtained a sharp upper bound for the first eigenvalue (\cite[Theorem 7.5]{FS3}):

\begin{theorem}[\cite{FS3}]
For any metric $g$ on the M\"obius band $M$ we have 
\[
        \sigma_1(g)L_g(\partial M)\leq (\sigma_1L)_{cmb} =2\pi\sqrt{3}
\]
with equality with equality if and only if $(M,g)$ is $\sigma$-homothetic to the critical M\"obius band.
\end{theorem}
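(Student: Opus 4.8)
The plan is to reduce the statement, through two intermediate facts, to the $S^1$-invariant maximization already established. The two facts are: (a) the supremum of $\bar\sigma_1(g)$ over all smooth metrics on the M\"obius band $M$ is attained by a smooth metric; and (b) any maximizing metric is, up to $\sigma$-homothety, $S^1$-invariant. Granting these, the theorem follows quickly. Since every $S^1$-invariant metric is an admissible competitor, $\sup_g \bar\sigma_1(g) \ge \sigma^{S^1}_1$, and by Theorem \ref{theorem:mobius} with $k=1$ one has $\sigma^{S^1}_1 = 4\pi\tanh(2T_{1,1}) = 2\pi\sqrt3$: indeed $\coth T_{1,1} = 2\tanh(2T_{1,1})$ forces $\tanh T_{1,1} = 1/\sqrt3$, whence $\tanh(2T_{1,1}) = \sqrt3/2$. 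Conversely, a maximizer $g$ exists by (a) and is $S^1$-invariant by (b), so $\bar\sigma_1(g) \le \sigma^{S^1}_1 = 2\pi\sqrt3$; hence $\sup_g \bar\sigma_1(g) = 2\pi\sqrt3$. If $\bar\sigma_1(g) = 2\pi\sqrt3$ then $g$ is a maximizer, hence $S^1$-invariant, hence by the uniqueness clause of Theorem \ref{theorem:mobius} (equivalently by Theorem \ref{theorem:classification}) $\sigma$-homothetic to the critical M\"obius band, the $(m,n)=(2,1)$ surface of Theorem \ref{theorem:critical}.

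For (a) and the passage to a minimal surface I would follow the scheme of \cite{FS3}. The moduli space of conformal structures on $M$ is a single real ray; one checks that as the conformal structure degenerates toward either end (the M\"obius band collapsing onto its core circle, or a disk splitting off) the supremum of $\bar\sigma_1$ over metrics in the class stays bounded above by $2\pi$, which is strictly less than $2\pi\sqrt3$. Since $2\pi\sqrt3$ is realized by the explicit critical M\"obius band, the supremum over all conformal classes is attained at an interior conformal structure, where a maximizer within the class exists by the existence theory for maximal Steklov metrics; elliptic regularity together with an argument excluding conical points gives smoothness. First-variation analysis at a maximizer $g$ then produces first Steklov eigenfunctions $\varphi_1,\dots,\varphi_n$ with $\sum_i \varphi_i^2 \equiv 1$ on $\partial M$ which assemble into a proper free boundary minimal immersion $\varphi = (\varphi_1,\dots,\varphi_n)\colon M \to \mathbb B^n$ whose induced metric is $\sigma$-homothetic to $g$ (the exclusion of branch points being part of the cited machinery).

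The decisive step, which I expect to be by far the hardest, is (b): a free boundary minimal M\"obius band immersed by first Steklov eigenfunctions is $S^1$-invariant. I would pass to the orientation double cover $\widehat M$, a flat annulus carrying a fixed-point-free orientation-reversing isometric involution $\tau$ with $\widehat M/\tau = M$; the lift $\widehat\varphi = \varphi\circ\pi$ is a $\tau$-equivariant free boundary minimal immersion of the annulus whose coordinate functions are Steklov eigenfunctions of eigenvalue $1$. Writing $\widehat M$ conformally as $[-T,T]\times S^1$, every harmonic function is a superposition of the pure modes $\cosh(jt)e^{\pm ij\theta}$ and $\sinh(jt)e^{\pm ij\theta}$ together with mode-$0$ data, and the combination of the constraint $\sum_i\widehat\varphi_i^2\equiv 1$ on $\partial\widehat M$, unique continuation for harmonic functions, and nodal-domain bounds on the first eigenspace should force the coordinates of $\widehat\varphi$ to involve only two frequencies, $\sinh(nt)e^{in\theta}$ and $\cosh(mt)e^{im\theta}$ with $m,n$ of opposite parity, so that $|\partial_\theta\widehat\varphi|$ is constant along $\partial\widehat M$ and the induced metric is invariant under rotation in $\theta$; i.e.\ $g$ is $S^1$-invariant. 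Given this, Theorem \ref{theorem:classification} identifies $\varphi$ as one of the explicit M\"obius bands of Theorem \ref{theorem:critical}, and being embedded by \emph{first} eigenfunctions selects $(m,n)=(2,1)$: for that surface the flat Steklov spectrum of the underlying M\"obius band at $t=t_{2,1}$ has least positive value $2\tanh(2t_{2,1}) = \coth(t_{2,1}) = \sqrt3$ of multiplicity $4$, rescaled to $1$ by the constant boundary conformal factor $|u_\theta| = \sqrt3$ (from $\tanh t_{2,1} = 1/\sqrt3$ one gets $r_{2,1}^2 = 4\sinh^2 t_{2,1} + \cosh^2 2t_{2,1} = 6$ and $|u_\theta|^2 = 4(\sinh^2 t_{2,1} + \cosh^2 2t_{2,1})/r_{2,1}^2 = 3$), whereas for every other admissible pair a strictly smaller eigenvalue already appears in the flat spectrum (the mode-$2$ eigenvalue when $n=1$ and $m\ge 4$, the mode-$1$ eigenvalue when $n\ge 3$). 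Hence the critical M\"obius band has boundary length $\int_0^{2\pi}|u_\theta|\,d\theta = 2\pi\sqrt3$ and $\bar\sigma_1 = 1\cdot 2\pi\sqrt3$, which completes the proof.
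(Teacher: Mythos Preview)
This theorem is not proved in the present paper: it is quoted from \cite{FS3} as background, and the paper only \emph{describes} the strategy behind it (existence of a smooth maximizer, then a uniqueness theorem showing that any free boundary minimal M\"obius band immersed by first Steklov eigenfunctions is $S^1$-invariant, hence the critical M\"obius band). Your outline follows exactly that described strategy, and your numerical verifications ($\tanh T_{1,1}=1/\sqrt3$, $r_{2,1}^2=6$, $|u_\theta|=\sqrt3$, $\bar\sigma_1=2\pi\sqrt3$) are all correct.

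The genuine gap is your step (b). You write that the boundary identity $\sum_i\widehat\varphi_i^2\equiv 1$, unique continuation, and ``nodal-domain bounds on the first eigenspace should force'' the coordinates to involve only two frequencies. This is the entire content of the uniqueness theorem in \cite{FS3}, and the argument there is not a nodal-domain count. It proceeds by analyzing the third fundamental form (the component of the second fundamental form in the direction of a specific normal built from the $\sinh$/$\cosh$ data), showing via a Hopf-differential/holomorphicity argument and an integrated Gauss--Bonnet-type identity on the boundary that this third fundamental form must vanish identically; only then does the immersion reduce to two pure frequencies and become rotationally symmetric. A Courant-type nodal bound alone does not control how many Fourier modes appear in a single eigenfunction, since on the annulus or M\"obius band many distinct frequencies can share the same Steklov eigenvalue at special values of $T$, and nothing in your sketch rules out mixed-mode coordinates satisfying $\sum_i u_i^2\equiv 1$ on the boundary.

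A secondary point: invoking Theorem~\ref{theorem:classification} after (b) is logically permissible (its proof in this paper is independent of the cited theorem), but it is not how \cite{FS3} proceeds, and it does not help with (b) itself since that theorem already presupposes $S^1$-invariance.
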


As in the case of the annulus, the explicit characterization of the maximizing metric follows from a minimal surface uniqueness theorem, characterizing the ``critical M\"obius band" (defined in Proposition \ref{prop:cmb}) as the unique free boundary immersion of the M\"obius band into $\mathbb{B}^n$ by first Steklov eigenfunctions. The main argument in the proof of this characterization involves showing that such a surface must be $S^1$-invariant. The $S^1$-invariant case is analyzed in detail in \cite[Proposition 7.1]{FS3}:

\begin{proposition}[\cite{FS3}] \label{prop:cmb}
There is a minimal embedding of the M\"obius band $\mathbb{R} \times S^1 / \sim$, with the identification $(t,\theta) \sim (-t,\theta+\pi)$, into $\mathbb R^4$ given by
\[ 
       u(t,\theta)
       =(2\sinh (t)\cos(\theta),2\sinh (t)\sin(\theta),\cosh (2t)\cos(2\theta),\cosh (2t)\sin(2\theta)).
\]
For a unique choice of $T$ the restriction of $\varphi$ to $[-T,T]\times S^1$ defines a proper embedding into a ball by first Steklov eigenfunctions. We may rescale the radius of the ball to $1$ to get the {\em critical M\"obius band}. Explicitly $T$ is the unique positive solution of $2\tanh 2t=\coth t$. Moreover, the maximum of 
$\sigma_1L$ over all $S^1$-invariant metrics on the M\"obius band is $2\pi \sqrt{3}$ and is uniquely achieved 
(up to conformal changes of the metric that are constant on the boundary) by the critical M\"obius band.
\end{proposition}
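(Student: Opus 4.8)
The plan is to split the statement into a geometric part --- that the explicit map $u$, after restricting to an appropriate sub-cylinder and rescaling, is a proper free boundary minimal embedding of the M\"obius band into $\mathbb{B}^4$ by first Steklov eigenfunctions --- and an optimization part --- that its induced metric maximizes $\sigma_1 L$ among $S^1$-invariant metrics, with maximum value $2\pi\sqrt{3}$.

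For the geometric part I would argue by direct computation. Since $dt^2+d\theta^2$ is a conformal representative on the cylinder, it suffices to check that each coordinate of $u$ is harmonic for $\partial_t^2+\partial_\theta^2$ (immediate, as $\sinh t,\cosh 2t,\cos n\theta,\sin n\theta$ are eigenfunctions of $\partial_t^2$ and $-\partial_\theta^2$ with matching eigenvalues) and that $u$ is conformal, i.e.\ $|u_t|^2=|u_\theta|^2$ and $\langle u_t,u_\theta\rangle=0$; one finds the common value $|u_t|^2=4\cosh^2 t+4\sinh^2 2t>0$, so $u$ is a conformal harmonic map with no branch points, hence a minimal immersion. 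It descends to $\mathbb{R}\times S^1/\!\sim$ because the odd factor $\sinh t$ multiplies $\cos\theta,\sin\theta$ (which change sign under $\theta\mapsto\theta+\pi$) while $\cosh 2t$ multiplies the $\pi$-periodic $\cos 2\theta,\sin 2\theta$; injectivity on the M\"obius band follows by using the last two coordinates to recover $\cosh 2t$ (hence $|t|$) and $2\theta\bmod 2\pi$, then the first two to pin the sign, which lands exactly on the identification $\sim$. Next, $|u(t,\theta)|^2=4\sinh^2 t+\cosh^2 2t=:\rho(t)^2$ depends only on $t$ and is strictly increasing in $|t|$, so for each $T$ the restriction of $u$ to $\{|t|\le T\}$ is a minimal surface inside the ball of radius $\rho(T)$, proper since $|u|=\rho(T)$ precisely on $\{|t|=T\}$ with the interior mapping into the open ball. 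It meets the bounding sphere orthogonally exactly when the position vector $u$ is normal to the surface along $\{t=T\}$, i.e.\ $u_t\parallel u$ there; comparing the first and third pairs of coordinates, this is the condition $\coth T=2\tanh 2T$. The function $t\mapsto 2\tanh 2t-\coth t$ has derivative $4\,\sech^2 2t+\csch^2 t>0$ and ranges from $-\infty$ to $1$, so the equation has a unique positive root $T$, and with $\tau=\tanh T$ the relation $\tfrac1\tau=\tfrac{4\tau}{1+\tau^2}$ gives $\tau=1/\sqrt3$. Rescaling by $1/\rho(T)$ produces the critical M\"obius band in $\mathbb{B}^4$; its coordinate functions are harmonic and satisfy $\partial_\eta u=\sigma u$ on the boundary, hence are Steklov eigenfunctions, and it remains to identify $\sigma$ with $\sigma_1$.

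For the optimization part I would reduce to one variable. Every $S^1$-invariant metric on the M\"obius band is conformal to $f(t)(dt^2+d\theta^2)$ on the flat cylinder $C_T:=([-T,T]\times S^1)/\!\sim$ with $f$ even, hence $\sigma$-homothetic to $C_T$. Since $\bar\sigma_1$ is a $\sigma$-homothety invariant and harmonic functions are conformally invariant in dimension two, $\bar\sigma_1$ of the metric equals $\bar\sigma_1(C_T)=2\pi\,\mu_1(T)$, where $\mu_k(T)$ are the eigenvalues of $\Delta u=0$ on $C_T$ with $\partial_t u=\mu u$ on $\partial C_T$. Separation of variables lists the harmonic functions on the cylinder as combinations of $1$, $t$, and $(\cosh nt,\sinh nt)\{\cos n\theta,\sin n\theta\}$; imposing invariance under $(t,\theta)\mapsto(-t,\theta+\pi)$ deletes the $t$-mode and forces, for even $n=2m$, the factor $\cosh 2mt$ with eigenvalue $2m\tanh 2mT$, and, for odd $n$, the factor $\sinh nt$ with eigenvalue $n\coth nT$. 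Using that $x\mapsto x\tanh x$ and $x\mapsto x\coth x$ are strictly increasing on $(0,\infty)$, the even eigenvalues are minimized at $n=2$ and the odd ones at $n=1$, so $\mu_1(T)=\min\{\coth T,\,2\tanh 2T\}$. As $\coth T$ strictly decreases from $+\infty$ to $1$ and $2\tanh 2T$ strictly increases from $0$ to $2$, $\mu_1$ is maximized at the unique $T$ with $\coth T=2\tanh 2T$ --- the same $T$ found above --- where $\mu_1=\coth T=\sqrt3$; hence $\max\bar\sigma_1=2\pi\sqrt3$, attained exactly for metrics $\sigma$-homothetic to $C_T$, i.e.\ to the critical M\"obius band. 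Finally, at this $T$ the $n=1$ eigenspace spanned by $\sinh t\cos\theta,\sinh t\sin\theta$ and the $n=2$ eigenspace spanned by $\cosh 2t\cos 2\theta,\cosh 2t\sin 2\theta$ both carry the eigenvalue $\sqrt3=\mu_1$, and their direct sum is exactly the span of the four coordinate functions of $u$; thus the critical M\"obius band is embedded by first Steklov eigenfunctions, and since the first eigenspace is at least four-dimensional the embedding naturally targets $\mathbb{B}^4$.

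The step I expect to be the main obstacle is the separation-of-variables bookkeeping: one must correctly determine which Fourier modes on the cylinder descend to the M\"obius band under $(t,\theta)\mapsto(-t,\theta+\pi)$ --- even harmonics carrying $\cosh$, odd harmonics carrying $\sinh$ --- since the wrong parity yields the wrong spectrum, and then carry out the monotonicity comparisons that isolate $n=1$ and $n=2$ as the only contributors to $\mu_1$. Everything else reduces to routine differentiations and elementary estimates for $\coth$ and $\tanh$.
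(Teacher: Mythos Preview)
Your proposal is correct and follows essentially the same approach as the paper. The paper states this proposition as a citation from \cite{FS3} rather than proving it directly, but its Section~3 develops precisely the separation-of-variables and monotonicity machinery you outline, for general $k$; your argument is the clean $k=1$ specialization, where the min of the two lowest branches $\coth T$ and $2\tanh 2T$ suffices and none of the bookkeeping in Lemmas~3.3--3.7 is needed. One small slip in phrasing: where you write that the free boundary condition is ``the position vector $u$ is normal to the surface,'' you mean tangent to the surface (equivalently, parallel to the outward conormal $u_t$); your stated condition $u_t\parallel u$ is the correct one and the rest of the argument proceeds from it.
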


In this section, we extend the analysis of $S^1$-invariant metrics on the M\"obius band of \cite{FS3} and consider the problem of maximizing the $k$-th normalized Steklov eigenvalue on the M\"{o}bius band over all $S^1$-invariant metrics. We show that this problem is solvable for all $k$, i.e. for each $k$, among all $S^1$-invariant metrics on the M\"{o}bius band, there is a metric that maximizes the $k$-th normalized Steklov eigenvalue and it is achieved by a free boundary minimal M\"{o}bius band in $\mathbb{B}^4$. This is in contrast to the case of the annulus, where the supremum of the second normalized eigenvalue is not achieved \cite{FTY}.

Any Riemannian M\"obius band is conformal to $M_T:=[-T,T] \times S^1 / \sim$ for some $T >0$, where $(t,\theta)\sim(t',\theta')$ if $t'=-t$ and $\theta'=\theta+\pi$, with the flat metric $dt^2+d\theta^2$. We refer to $T$ as the {\em conformal modulus} of the M\"obius band. We consider $S^1$-invariant metrics on the M\"obius band; that is, conformal metrics on $M_T$, of the form
\[
                 g=f(t)^2(dt^2+d\theta^2)
\]
where $f:[-T,T]\rightarrow\mathbb{R}$ is a smooth function satisfying $f(t)=f(-t)$. Our goal is to determine the supremum, which we denote by $\sigma^{S^1}_k$, of the $k$-th nonzero normalized Steklov eigenvalue among all $S^1$-invariant metrics on the M\"obius band, 
\[
       \sigma^{S^1}_k:=\sup_{T>0} \; \sup \{ \bar{\sigma}_k(g) \, : \, g=f^2(t)(dt^2 + d\theta^2) \mbox{ on } M_T    
       \mbox{ with } f>0, \, f(-t)=f(t)  \}.
\] 

First, we fix the conformal class $T>0$ and an $S^1$-invariant metric $g=f(t)^2(dt^2+d\theta^2)$ on $M_T$.
The outward unit normal vector at a boundary point $(T,\theta)$ is given by $\eta=f(T)^{-1}\der{t}$. A function $u(t,\theta)$ on $M_T$ satisfying $u(t,\theta) = u(-t,\theta+\pi)$ is Steklov eigenfunction if it is a harmonic function on $M_T$ such that $u_\eta=\sigma u$ on $\partial M_T$ for some $\sigma \geq 0$. Notice that $u(t,\theta)$ is harmonic with respect to $g=f(t)^2(dt^2+d\theta^2)$ if it is harmonic with respect to the flat metric $dt^2+d\theta^2$, and thus satisfies the equation $u_{tt}+u_{\theta \theta}=0$. We may use the method of separation of variables to get $u(t,\theta) = \alpha(t)\beta(\theta)$, with $\alpha(t) = \alpha(-t)$ and $\beta(\theta)=\beta(\theta+\pi)$ and 
\begin{equation*}
\frac{\alpha''(t)}{\alpha(t)} = -\frac{\beta''(\theta)}{\beta(\theta)}= k^2.
\end{equation*}
We obtain solutions $u(t,\theta)$ for each nonnegative integer $k$ given by linear combinations of $\sinh(kt)\sin(k\theta)$ and $\sinh(kt)\cos(k\theta)$ when $n$ is odd, and $\cosh(kt)\sin(k\theta)$ and $\cosh(kt)\cos(k\theta)$ when $k$ is even. For $k=0$ the solutions are constants.

In order to be a Steklov eigenfunction we must have $u_\eta=\sigma u$ on the boundary, or $f(T)^{-1}u_t=\sigma u$ at the boundary point $(T,\theta)$. For $k=0$ we have $u(t,\theta)=a$, a constant, and $\sigma=0$. For $k \geq 1$ odd the eigenfunctions have $\alpha(t)=a \sinh (kt)$ and the condition is
\[
              k f(T)^{-1} \cosh(kT)=\sigma \sinh(kT).
\]
Therefore, $\sigma=k f(T)^{-1} \coth(kT)$. For $k \geq 1$ even the eigenfunctions have $\alpha(t)=a \cosh (kt)$ and the condition is
\[
              k f(T)^{-1} \sinh(kT)=\sigma  \cosh(kT).
\]
Therefore, $\sigma=k f(T)^{-1} \tanh(kT)$.
 
Thus, the nonzero Steklov eigenvalues are
\begin{equation*}
                  \lambda_k= \frac{2k}{f(T)}\tanh(2kT), \quad \text{and} \quad 
                  \mu_k= \frac{(2k-1)}{f(T)}\coth((2k-1)T),
\end{equation*}
$k=1,2,\ldots$. The length of the boundary is $L_g(\partial M_T)=2\pi f(T)$, and the normalized eigenvalues are
\begin{equation*}
                \bl_k= 4\pi k\tanh(2kT), \quad \text{and} \quad 
                \bm_k= 2\pi(2k-1)\coth((2k-1)T),
\end{equation*}
$k=1,2,\ldots$.
Notice that in each conformal class of metrics on the M\"obius band (i.e. for each $T >0$), the normalized eigenvalues $\bl_k$ and $\bm_k$ are constant on the space of $S^1$-invariant metrics in that conformal class (i.e. independent of $f$). We let $\bar{\sigma}_k(T)$ denote the $k$-th normalized Steklov eigenvalue of any $S^1$-invariant metric on $M_T$.
Then, the supremum of the $k$-th nonzero normalized Steklov eigenvalue among all $S^1$-invariant metrics on the M\"obius band is
\[
       \sigma^{S^1}_k=\sup_{T>0} \; \bar{\sigma}_k(T).
\]

In order to determine $\sigma^{S^1}_k$ and prove Theorem \ref{theorem:mobius} and Theorem \ref{theorem:critical} we will now prove a series of lemmas.

\begin{lemma}\label{LambdaMuMonotone}
Let $k,l \geq1$. Then
\begin{enumerate}
\item[(i)] $\bl_k<\bl_{k+1}$, $\bm_l<\bm_{l+1}$. Furthermore, $\bl_n<\bm_{n+1}$ for $n\geq 1$, and each $\bl_k$ and $\bm_l$ has multiplicity 2.
\item[(ii)] $\bl_k(T)$ is monotone increasing in $T$ and $\bm_l(T)$ is monotone decreasing in $T$.
\item[(iii)] ${\displaystyle \bl_k(\infty):=\lim_{T\rightarrow\infty}\bl_k(T) = 4\pi k}$ and ${\displaystyle \bm_l(\infty):=\lim_{T\rightarrow\infty}\bm_l(T) = 2\pi(2l-1)}$.
\end{enumerate} 
\end{lemma}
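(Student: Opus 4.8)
The plan is to verify each of the three assertions by elementary analysis of the explicit formulas
$\bl_k(T) = 4\pi k \tanh(2kT)$ and $\bm_l(T) = 2\pi(2l-1)\coth((2l-1)T)$, which were derived just above the statement. Since each of these is manifestly a smooth function of $T$ on $(0,\infty)$, the whole lemma reduces to calculus facts about $\tanh$ and $\coth$, together with a single comparison inequality.

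For part (ii), I would simply differentiate: $\frac{d}{dT}\tanh(2kT) = 2k\,\mathrm{sech}^2(2kT) > 0$, so $\bl_k(T)$ is strictly increasing, while $\frac{d}{dT}\coth((2l-1)T) = -(2l-1)\,\mathrm{csch}^2((2l-1)T) < 0$, so $\bm_l(T)$ is strictly decreasing. For part (iii), I would take the limits: $\tanh(2kT) \to 1$ and $\coth((2l-1)T) \to 1$ as $T \to \infty$, giving $\bl_k(\infty) = 4\pi k$ and $\bm_l(\infty) = 2\pi(2l-1)$. For part (i), the monotonicity $\bl_k < \bl_{k+1}$ follows because $4\pi k \tanh(2kT) < 4\pi k \tanh(2(k+1)T) < 4\pi(k+1)\tanh(2(k+1)T)$, using that $\tanh$ is increasing and positive; similarly $\bm_l < \bm_{l+1}$ follows from $\coth$ being decreasing and positive in its argument but the prefactor $2l-1$ growing — here one must be slightly careful, since $\coth$ is decreasing, so one argues $2\pi(2l-1)\coth((2l-1)T) < 2\pi(2l+1)\coth((2l+1)T)$ by noting $x\coth(x)$ is increasing in $x>0$ (its derivative is $\coth x - x\,\mathrm{csch}^2 x > 0$, equivalently $\sinh(2x) > 2x$) and applying this with $x = (2l-1)T < (2l+1)T$. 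The multiplicity-2 statements are immediate from the discussion preceding the lemma: for each admissible $k$ there are two independent eigenfunctions (the $\sin$ and $\cos$ combinations).

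The only genuinely substantive point, and the one I expect to be the main obstacle, is the interlacing inequality $\bl_n < \bm_{n+1}$ for $n \geq 1$, i.e.
\[
     4\pi n \tanh(2nT) < 2\pi(2n+1)\coth((2n+1)T) \qquad \text{for all } T > 0.
\]
Since $\tanh(2nT) < 1$, the left side is bounded above by $4\pi n$, and since $\coth((2n+1)T) > 1$, the right side is bounded below by $2\pi(2n+1) = 4\pi n + 2\pi > 4\pi n$. Hence the inequality holds with room to spare, and no delicate estimate is needed. This completes all parts. I would present the argument in this order: first (ii) and (iii) (pure calculus), then the intra-family monotonicity in (i) via the auxiliary observation that $x\coth x$ is increasing, then the cross-family bound $\bl_n < \bm_{n+1}$ by the crude comparison above, and finally the multiplicity count by reference to the separation-of-variables analysis.
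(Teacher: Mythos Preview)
Your proposal is correct and follows the same approach as the paper: direct computation from the explicit formulas for $\bl_k$ and $\bm_l$. The paper's own proof is extremely terse --- it simply declares (i) and (iii) ``clear by direct calculation'' and writes down the derivatives for (ii) --- so your version is in fact more complete, in particular your argument that $x\coth x$ is increasing (needed for $\bm_l<\bm_{l+1}$) and your crude bound $\tanh<1<\coth$ for the interlacing $\bl_n<\bm_{n+1}$ make explicit what the paper leaves to the reader.
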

\begin{proof}
First, (i) and (iii) are clear by direct calculation. Now, (ii) follows from the fact that
\begin{equation*}
                \frac{d\bl_k}{dT} =8\pi k^2\sech^2(2kT)>0 \quad \text{and} \quad \frac{d\bm_l}{dT} 
                                           = -2\pi(2l-1)^2\csch^2((2l-1)T)<0.
\end{equation*}
\end{proof}

\begin{lemma}\label{TklExist}
There exists $T>0$ such that $\bl_k(T)=\bm_l(T)$ if and only if $l\leq k$. Moreover, $T$ is unique if it exists.
\end{lemma}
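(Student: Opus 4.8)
The plan is to reduce everything to the behaviour of the single function $F(T) := \bl_k(T) - \bm_l(T)$ on $(0,\infty)$, combining the intermediate value theorem with strict monotonicity. By Lemma \ref{LambdaMuMonotone}(ii), $\bl_k(T) = 4\pi k\tanh(2kT)$ is strictly increasing in $T$ and $\bm_l(T) = 2\pi(2l-1)\coth((2l-1)T)$ is strictly decreasing in $T$, so $F$ is continuous and strictly increasing on $(0,\infty)$. In particular $F$ has at most one zero, which immediately yields the uniqueness statement once existence is known.

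Next I would examine the two endpoint limits. As $T\to 0^+$ we have $\tanh(2kT)\to 0$ while $\coth((2l-1)T)\to+\infty$, so $F(T)\to-\infty$; and by Lemma \ref{LambdaMuMonotone}(iii),
\[
     \lim_{T\to\infty} F(T) = \bl_k(\infty) - \bm_l(\infty) = 4\pi k - 2\pi(2l-1) = 2\pi\bigl(2(k-l)+1\bigr).
\]
Since $k,l$ are integers, the odd number $2(k-l)+1$ is positive precisely when $l\le k$ and negative precisely when $l>k$.

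To finish: if $l\le k$, then $F(0^+)=-\infty<0<\lim_{T\to\infty}F(T)$, so the intermediate value theorem gives a zero of $F$, which is unique by the strict monotonicity above — this is the asserted $T$. If $l>k$, then $\lim_{T\to\infty}F(T)<0$, and since $F$ is strictly increasing we get $F(T)<\lim_{S\to\infty}F(S)<0$ for all $T>0$, so $\bl_k(T)\neq\bm_l(T)$ for every $T$. This is essentially the whole argument; the only step needing a word of justification is the limit $F(0^+)=-\infty$, and there is no genuine obstacle here.
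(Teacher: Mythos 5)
Your proof is correct and follows essentially the same route as the paper: define $F_{k,l}(T)=\bl_k(T)-\bm_l(T)$, use strict monotonicity from Lemma \ref{LambdaMuMonotone}(ii), compute the endpoint limits $F(0^+)=-\infty$ and $F(\infty)=2\pi(2k-(2l-1))$, and conclude by the intermediate value theorem together with strict monotonicity. There is nothing to add.
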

\begin{proof}
Let $F_{k,l}(T) = \bl_k(T) - \bm_l(T) = 2\pi\left(2k\tanh(2kT)-(2l-1)\coth\left((2l-1)T\right)\right)$. Then $F_{k,l}(T)$ is continuous on $(0,\infty)$ and
\begin{equation*}
                 \lim_{T\rightarrow0} F_{k,l}(T) 
                 = -\infty \quad \text{and} \quad \lim_{T\rightarrow\infty}F_{k,l}(T) 
                 = 2\pi(2k-(2l-1)).
\end{equation*}
Thus ${\displaystyle \lim_{T\rightarrow\infty}F_{k,l}(T)>0}$ if and only if $l\leq k$. Furthermore, $F_{k,l}(T)$ is monotone increasing on $(0,\infty)$ since $\bl_k(T)$ is monotone increasing and $\bm_l(T)$ is monotone decreasing. Hence there exists a unique $T>0$ for which $\bl_k(T)=\bm_l(T)$ if and only if $l\leq k$. 
\end{proof}

\begin{definition}
For $l\leq k$ let $T_{k,l}$ be the unique positive number such that
\begin{equation*}
         \bl_k(T_{k,l})=\bm_l(T_{k,l}).
\end{equation*}
\end{definition}

\begin{lemma}\label{TklLem}
For $l\leq k$, $T_{k,l}$ is decreasing in $k$ and increasing in $l$. 
\end{lemma}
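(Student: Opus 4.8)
The plan is to establish the monotonicity of $T_{k,l}$ in each variable separately by exploiting the fact that $T_{k,l}$ is implicitly defined by the equation $\bl_k(T)=\bm_l(T)$, and comparing the relevant functions. First I would fix $l$ and show $T_{k,l}$ is decreasing in $k$. Recall $F_{k,l}(T)=\bl_k(T)-\bm_l(T)$ is monotone increasing in $T$ with $F_{k,l}(T_{k,l})=0$. To compare $T_{k+1,l}$ with $T_{k,l}$ it suffices to show $F_{k+1,l}(T_{k,l})>0$, i.e. $\bl_{k+1}(T_{k,l})>\bm_l(T_{k,l})=\bl_k(T_{k,l})$, which is immediate from Lemma \ref{LambdaMuMonotone}(i) since $\bl_k<\bl_{k+1}$ pointwise in $T$. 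Since $F_{k+1,l}$ is increasing and vanishes at $T_{k+1,l}$, positivity at $T_{k,l}$ forces $T_{k+1,l}<T_{k,l}$.

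For the dependence on $l$, fix $k$ and compare $T_{k,l+1}$ with $T_{k,l}$ (both defined provided $l+1\le k$). It suffices to show $F_{k,l+1}(T_{k,l})<0$, i.e. $\bm_{l+1}(T_{k,l})>\bl_k(T_{k,l})=\bm_l(T_{k,l})$. This again follows directly from Lemma \ref{LambdaMuMonotone}(i), which gives $\bm_l<\bm_{l+1}$ pointwise in $T$. Since $F_{k,l+1}$ is increasing and vanishes at $T_{k,l+1}$, negativity at $T_{k,l}$ forces $T_{k,l+1}>T_{k,l}$, so $T_{k,l}$ is increasing in $l$.

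I do not anticipate a serious obstacle here: the argument is a clean application of the intermediate value theorem combined with the strict monotonicity of $F_{k,l}$ in $T$ and the pointwise orderings of the $\bl$'s and $\bm$'s already recorded in Lemma \ref{LambdaMuMonotone}. The one small point requiring care is bookkeeping of the constraint $l\le k$: when proving monotonicity in $k$ we need $l\le k$ so that $T_{k,l}$ exists (and then $l\le k<k+1$ is automatic), and when proving monotonicity in $l$ we need $l+1\le k$ so that $T_{k,l+1}$ is defined, which is exactly the regime in which the comparison is being asserted. With these in place the full statement follows by iterating the one-step comparisons.
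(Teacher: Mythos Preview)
Your proposal is correct and matches the paper's proof essentially line for line: both show $F_{k+1,l}(T_{k,l})>0$ via $\bl_k<\bl_{k+1}$ and $F_{k,l+1}(T_{k,l})<0$ via $\bm_l<\bm_{l+1}$, then deduce the ordering of the zeros from the strict monotonicity of $F$ in $T$. The only cosmetic difference is that the paper phrases the last step in terms of the limits of $F$ at $0$ and $\infty$ rather than directly invoking that $F$ is increasing with a unique zero, but this is the same argument.
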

\begin{proof}
Since $\bl_k(T)<\bl_{k+1}(T)$, we have that
\begin{equation*}
        \bm_l(T_{k,l}) =\bl_k(T_{k,l})<\bl_{k+1}(T_{k,l}).
\end{equation*}
Hence, $F_{k+1,l}(T_{k,l})>0$, where $F_{k,l}$ is as in the proof of Lemma \ref{TklExist}, and, again, 
\[
     \lim_{T\rightarrow 0} F_{k+1,l}(T)=-\infty.
\]
Hence $T_{k+1,l}<T_{k,l}$. Similarly, if $l+1\leq k$,
\begin{equation*}
         \bl_k(T_{k,l})=\bm_l(T_{k,l})<\bm_{l+1}(T_{k,l}),
\end{equation*}
and so $F_{k,l+1}(T_{k,l})<0$. Since ${\displaystyle \lim_{t\rightarrow\infty} F_{k,l+1}(T)>0}$, it follows that $T_{k,l}<T_{k,l+1}$.
\end{proof}

For fixed $k> 0$, let $s=\lfloor{\frac{k}{2}}\rfloor$. By Lemma \ref{TklLem}, we see that we can decompose $[0,\infty)$ as
\begin{equation*}
            [0,\infty) 
            = \bigcup_{j=0}^{s}\big( [T_{k-j,j}, T_{k-j,j+1})\cup[T_{k-j,j+1}, T_{k-j-1,j+1}) \big),
\end{equation*}
where we define $T_{k,0}=0$ and $T_{k,l}=\infty$ if $k < l$.

\begin{lemma}\label{SigmaBound}
For $k\geq1$, 
\begin{align*}
      \bs_{2k-1}&(T)=\bs_{2k}(T) \\
      &=\begin{cases}
      \bl_{k-j}(T) \leq \bl_{k-j}(T_{k-j,j+1}) & \mbox{if } T \in [T_{k-j,j}, T_{k-j,j+1}), \, 0 \leq j \leq s \\
      \bm_{j+1}(T) \leq \bl_{k-j}(T_{k-j,j+1}) & \mbox{if } T \in [T_{k-j,j+1}, T_{k-j-1,j+1}), \, 0 \leq j \leq s \\
      \bl_{k/2}(T) \leq \bl_{k/2}(\infty) & \mbox{if } T \in [T_{k-s,s}, \infty), \, s=\frac{k}{2}, \, k \mbox{ even} \\
      \bm_{(k+1)/2}(T) \leq \bl_{(k+1)/2}(T_{(k+1)/2, (k+1)/2})  
                    & \mbox{if } T\in[T_{k-s,s+1}, \infty), \, s=\frac{k-1}{2}, \, k \mbox{ odd} 
\end{cases}.
\end{align*}
\end{lemma}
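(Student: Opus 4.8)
The plan is, for each conformal modulus $T>0$, to pin down exactly which of the values $\bl_i(T)$, $\bm_j(T)$ is the $k$-th smallest nonzero Steklov eigenvalue, using only the monotonicity statements from Lemmas \ref{LambdaMuMonotone} and \ref{TklLem}. First I would record the reduction. By Lemma \ref{LambdaMuMonotone}(i) every $\bl_i$ and $\bm_j$ has multiplicity two, so the nonzero Steklov eigenvalues of $M_T$, listed with multiplicity, are $s_1,s_1,s_2,s_2,\dots$, where $s_1\le s_2\le\cdots$ is the merge of the two increasing sequences $(\bl_i(T))_{i\ge1}$ and $(\bm_j(T))_{j\ge1}$ (a value common to both sequences contributing two entries to the merge). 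Thus $\bs_{2k-1}(T)=\bs_{2k}(T)=s_k(T)$, and to identify $s_k(T)$ it suffices, for a candidate value $x\in\{\bl_i(T)\}\cup\{\bm_j(T)\}$, to count how many of the $\bl_i(T)$ and $\bm_j(T)$ lie strictly below $x$: one has $x=s_k(T)$ exactly when that count is at most $k-1$ while the count of values $\le x$ is at least $k$. Throughout I use the conventions $T_{k,0}=0$, $T_{a,b}=\infty$ for $b>a$, and $\bl_i(\infty),\bm_i(\infty)$ as in Lemma \ref{LambdaMuMonotone}(iii); with these the third and fourth cases of the statement are just the instances $j=k/2$ (of the first case, $k$ even) and $j=(k-1)/2$ (of the second case, $k$ odd) of the decomposition of $[0,\infty)$ displayed before the lemma, whose disjointness and exhaustiveness follow at once from Lemma \ref{TklLem}.

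Next I would carry out the count on the two model subintervals. On $[T_{k-j,j},T_{k-j,j+1})$ I claim $s_k=\bl_{k-j}$: exactly $k-j-1$ of the $\bl_i$ lie strictly below $\bl_{k-j}$; and since $\bl_{k-j}(\cdot)$ is increasing and $\bm_{j+1}(\cdot)$ decreasing (Lemma \ref{LambdaMuMonotone}(ii)) and they agree at $T_{k-j,j+1}$ --- or $\bl_{k-j}<\bm_{j+1}$ identically, by $\bl_n<\bm_{n+1}$ from Lemma \ref{LambdaMuMonotone}(i), when $T_{k-j,j+1}=\infty$ --- one has $\bl_{k-j}(T)<\bm_{j+1}(T)$ on this subinterval, while similarly $\bm_j(T)\le\bl_{k-j}(T)$ for $T\ge T_{k-j,j}$ (when $j\ge1$), with equality only at the left endpoint. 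Hence, for $T$ in the interior of the subinterval, exactly $j$ of the $\bm_{j'}$ lie strictly below $\bl_{k-j}$, the total strict count is $(k-j-1)+j=k-1$, and so $s_k=\bl_{k-j}$; the bound $\bl_{k-j}(T)\le\bl_{k-j}(T_{k-j,j+1})$ is then immediate from monotonicity (reading as $\bl_{k/2}(T)\le\bl_{k/2}(\infty)$ when $j=k/2$ and $k$ is even, which is the third case). The subinterval $[T_{k-j,j+1},T_{k-j-1,j+1})$ is handled symmetrically to give $s_k=\bm_{j+1}$: now $j$ of the $\bm_{j'}$ and $k-j-1$ of the $\bl_i$ lie strictly below $\bm_{j+1}$, using $\bl_{k-j}(T)\ge\bm_{j+1}(T)$ for $T\ge T_{k-j,j+1}$ (equality only at the left endpoint) and $\bl_{k-j-1}(T)<\bm_{j+1}(T)$ on the subinterval --- the latter from the defining equation of $T_{k-j-1,j+1}$ or, when that modulus is infinite, again from $\bl_n<\bm_{n+1}$; the bound $\bm_{j+1}(T)\le\bm_{j+1}(T_{k-j,j+1})=\bl_{k-j}(T_{k-j,j+1})$ follows from monotonicity and the definition of $T_{k-j,j+1}$.

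The one point requiring care is the common endpoint of two consecutive subintervals, where some $\bl_i$ equals some $\bm_{j'}$ and that value therefore occurs twice in the merged sequence; there the strict count drops to $k-2$, but the doubled value still occupies position $k$, so $s_k$ is unchanged and the formula, which is continuous in $T$, agrees from both sides. I expect this endpoint bookkeeping to be the only real obstacle --- keeping straight which inequalities are strict, which subintervals are half-open on which end, and which ones degenerate or become unbounded under the $T_{a,b}=\infty$ convention. There is no analytic content beyond the monotonicity and limit statements already in hand.
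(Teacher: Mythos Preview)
Your proof is correct and takes a somewhat different route from the paper's. The paper proceeds by induction on $j$: it orders the full spectrum explicitly on $(0,T_{k,1})$ and $[T_{k,1},T_{k-1,1})$, then at each transition point $T_{k-j,j}$ observes that the branch currently equal to $s_k$ meets its neighbour, and uses monotonicity of $\bl_{k-j}$ and $\bm_j$ to identify which branch $s_k$ follows as $T$ increases past that point, continuing until the next transition. Your argument is instead a direct count: on each subinterval you verify that exactly $k-1$ members of $\{\bl_i(T)\}\cup\{\bm_{j'}(T)\}$ lie strictly below the candidate, using the same monotonicity facts and the defining relations of the $T_{k,l}$. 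Both approaches rest on the same ingredients (Lemmas \ref{LambdaMuMonotone}--\ref{TklLem}) and the same endpoint bookkeeping; the paper's inductive tracking is more narrative---one sees $s_k$ alternately rising along a $\bl$-branch and falling along a $\bm$-branch---while your static count is a little more economical and dispenses with the induction. One small expository point: when you invoke $\bl_n<\bm_{n+1}$ from Lemma \ref{LambdaMuMonotone}(i) in the infinite-endpoint cases, you are implicitly combining it with the index inequality $k-j\le j$ (respectively $k-j-1\le j$) forced by $T_{k-j,j+1}=\infty$ (respectively $T_{k-j-1,j+1}=\infty$) to get $\bm_{k-j+1}\le\bm_{j+1}$; this step is correct but would be worth spelling out.
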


\begin{proof}
We prove the result by induction on $j$. First consider the case when $j=0$ and suppose $T \in (0,T_{k,1})$. Then, since $\bl_{k}(T)$ is increasing in $T$ and $\bm_1(T)$ is decreasing in $T$ by Lemma \ref{LambdaMuMonotone}, we have that
\[
    \bl_1(T) < \bl_2(T) < \cdots < \bl_{k}(T) < \bl_{k}(T_{k,1}) 
    = \bm_1(T_{k,1}) <\bm_1(T) < \bm_2(T) < \cdots.
\]
Since each $\bl_n(T)$ has multiplicity two, $\bs_{2k-1}(T)=\bs_{2k}(T)=\bl_k(T) < \bl_k(T_{k,1})$. Now suppose $j=0$ and $T \in [T_{k,1},T_{k-1,1})$. Then,
\[
     \bl_1(T) < \cdots < \bl_{k-1}(T) < \bl_{k-1}(T_{k-1,1})
     =\bm_1(T_{k-1,1}) < \bm_1(T) \leq \bm_1(T_{k,1}) = \bl_{k}(T_{k,1}) < \bl_{k}(T).
\]
Since $\bm_1(T)$ and each $\bl_n(T)$ have multiplicity two, 
$\bs_{2k-1}(T)=\bs_{2k}(T)=\bm_1(T) \leq \bm_1(T_{k,1})=\bl_{k}(T_{k,1})$.

For $1 \leq j < s$, assuming the result holds for $j-1$, we show that it holds for $j$. First suppose $T \in [T_{k-j,j}, T_{k-j,j+1})$. 
By the induction hypothesis, $\bs_{2k-1}(T)=\bs_{2k}(T)=\bm_{j}(T)$ for $T \in [T_{k-j+1,j}, T_{k-j,j})$, and $\bs_{2k-1}(T_{k-j,j})=\bs_{2k}(T_{k-j,j})=\bm_{j}(T_{k-j,j})=\bl_{k-j}(T_{k-j,j})$. Since $\bm_j(T)$ is monotone decreasing in $T$, $\bl_{k-j}(T)$ is monotone increasing in $T$, and each has multiplicity two, we must have $\bs_{2k-1}(T)=\bs_{2k}(T)=\bl_{k-j}(T)$ for $T \in [T_{k-j,j}, T_{k-j,j}+\varepsilon)$ for some $\varepsilon >0$. Since for $T \in (T_{k-j,j}, T_{k-j,j+1})$,
\[       
       \bl_{k-j}(T) < \bl_{k-j}(T_{k-j,j+1})  =\bm_{j+1}(T_{k-j,j+1}) < \bm_{j+1}(T)
\]
it follows that $\bs_{2k-1}(T)=\bs_{2k}(T)=\bl_{k-j}(T) < \bl_{k-j}(T_{k-j,j+1})$ for all $T \in [T_{k-j,j}, T_{k-j,j+1})$. Now suppose $T \in [T_{k-j,j+1}, T_{k-j-1,j+1})$. Since 
\[
     \bs_{2k-1}(T_{k-j,j+1})=\bs_{2k}(T_{k-j,j+1})=\bl_{k-j}(T_{k-j,j+1})=\bm_{j+1}(T_{k-j,j+1}),
\]
$\bl_{k-j}(T)$ is monotone increasing in $T$, $\bm_{j+1}(T)$ is monotone decreasing in $T$, and each has multiplicity two, we must have $\bs_{2k-1}(T)=\bs_{2k}(T)=\bm_{j+1}(T)$ for $T \in [T_{k-j,j+1}, T_{k-j,j+1}+\varepsilon)$ for some $\varepsilon >0$. Since for $T \in (T_{k-j,j+1}, T_{k-j-1,j+1})$,
\[
      \bm_{j+1}(T) > \bm_{j+1}(T_{k-j-1,j+1}) = \bl_{k-j-1}(T_{k-j-1,j+1})  > \bl_{k-j-1}(T) 
\]
it follows that $\bs_{2k-1}(T)=\bs_{2k}(T)=\bm_{j+1}(T) \leq \bl_{k-j}(T_{k-j,j+1})$ for all $T \in [T_{k-j,j+1}, T_{k-j-1,j+1})$.

For $j=s$ and $k$ even, we have $s={k}/{2}$, and  
$[T_{k-j,j},T_{k-j,j+1})=[T_{k-s,s}, \infty)=[T_{{k}/{2},{k}/{2}},\infty)$. 
From above, we know that $\bs_{2k-1}(T)=\bs_{2k}(T)=\bm_{s}(T)$ for $T \in [T_{k-s+1,s}, T_{k-s,s})$. Since  
$\bs_{2k-1}(T_{k-s,s})=\bs_{2k}(T_{k-s,s})=\bm_{s}(T_{k-s,s})=\bl_{k-s}(T_{k-s,s})$, $\bm_s(T)$ is monotone decreasing in $T$, $\bl_{k-s}(T)$ is monotone increasing in $T$, and each has multiplicity two, we must have $\bs_{2k-1}(T)=\bs_{2k}(T)=\bl_{k-s}(T)$ for $T \in [T_{k-s,s}, T_{k-s,s}+\varepsilon)$ for some $\varepsilon >0$. Since for $T \in (T_{k-s,s},\infty)$,
\[
      \bl_{k-s}(T) < \bl_{k-s}(\infty)=4\pi(k-s) < 2\pi(2(s+1)-1)=\bm_{s+1}(\infty) < \bm_{s+1}(T)
\]
it follows that $\bs_{2k-1}(T)=\bs_{2k}(T)=\bl_{k-s}(T) = \bl_{k/2}(T) < \bl_{k/2}(\infty)$ for all $T \in [T_{k-s,s}, \infty)$.

For $j=s$ and $k$ odd, we have $s=(k-1)/2$. First suppose 
$T \in 
[T_{k-s,s}, T_{k-s,s+1})$. 
By the induction argument above with $j=s$ it follows that $\bs_{2k-1}(T)=\bs_{2k}(T)=\bl_{k-s}(T) < \bl_{k-s}(T_{k-s,s+1})$ for $T \in [T_{k-s,s}, T_{k-s,s+1})$.
Now suppose 
\[
       T \in 
       [T_{k-s,s+1}, T_{k-s-1,s+1}) = [T_{(k+1)/2,(k+1)/2}, \infty).
\] 
Since $\bs_{2k-1}(T_{k-j,j+1})=\bs_{2k}(T_{k-j,j+1})=\bl_{k-s}(T_{k-s,s+1})=\bm_{s+1}(T_{k-s,s+1})$, $\bl_{k-s}(T)$ is monotone increasing in $T$, $\bm_{s+1}(T)$ is monotone decreasing in $T$, and each has multiplicity two, we must have $\bs_{2k-1}(T)=\bs_{2k}(T)=\bm_{s+1}(T)$ for $T \in [T_{k-s,s+1}, T_{k-s,s+1}+\varepsilon)$ for some $\varepsilon >0$. Since for $T \in (T_{k-s,s+1}, T_{k-s-1,s+1})$,
\[
      \bm_{s+1}(T) > \bm_{s+1}(\infty) = 2\pi(2(s+1)-1) > 4\pi(k-s-1)=\bl_{k-s-1}(\infty)  > \bl_{k-s-1}(T) 
\]
it follows that $\bs_{2k-1}(T)=\bs_{2k}(T)=\bm_{s+1}(T) = \bm_{(k+1)/2}(T) \leq \bl_{(k+1)/2}(T_{(k+1)/2,(k+1)/2})$ for all $T \in [T_{(k+1)/2,(k+1)/2}, \infty)$.
\end{proof}

\begin{lemma}\label{FunctionLem}
Let 
\begin{equation*}
      f(t)=\sinh(t)\cosh(t)-t \quad \text{and} \quad  g(t) = \frac{\sinh(t)\cosh(t)-t}{t^2}.
\end{equation*}
Then $f(t)>0$ and $g'(t)>0$ for all $t>0$.
\end{lemma}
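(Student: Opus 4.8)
The plan is to reduce both assertions to the positivity of explicit functions and a couple of their derivatives, integrating inequalities up from $t=0$.

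For the first claim, rewrite $f(t)=\tfrac12\sinh(2t)-t$, so that $f(0)=0$ and
\[
   f'(t)=\cosh(2t)-1=2\sinh^2 t,
\]
which is strictly positive for $t>0$. Hence $f$ is strictly increasing on $[0,\infty)$ and $f(t)>f(0)=0$ for every $t>0$.

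For the second claim, since $g(t)=f(t)/t^{2}$ one computes
\[
   g'(t)=\frac{t\,f'(t)-2f(t)}{t^{3}},
\]
so, as $t^{3}>0$, it suffices to show that $h(t):=t\,f'(t)-2f(t)$ is positive on $(0,\infty)$. Substituting $f'(t)=\cosh(2t)-1$ and $f(t)=\tfrac12\sinh(2t)-t$ gives $h(t)=t\cosh(2t)-\sinh(2t)+t$, in particular $h(0)=0$. Differentiating, $h'(t)=2t\sinh(2t)-\cosh(2t)+1$ with $h'(0)=0$, and differentiating once more, $h''(t)=4t\cosh(2t)$, which is strictly positive for $t>0$. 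Therefore $h'$ is strictly increasing on $[0,\infty)$, whence $h'(t)>h'(0)=0$ for $t>0$; then $h$ is strictly increasing on $[0,\infty)$, whence $h(t)>h(0)=0$ for $t>0$. This yields $g'(t)>0$ for all $t>0$.

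There is essentially no obstacle here: the statement is a routine computation once one clears the denominator in $g'$ and introduces the auxiliary function $h(t)=t f'(t)-2f(t)$. The only mild subtlety is that one must differentiate twice before arriving at a manifestly nonnegative expression ($4t\cosh(2t)$), after which the conclusion follows by the standard argument that a function vanishing at $0$ with positive derivative on $(0,\infty)$ is positive there.
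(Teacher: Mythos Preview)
Your proof is correct and follows essentially the same strategy as the paper: show $f'>0$ to get $f>0$, then compute $g'$ by the quotient rule and reduce to positivity of the numerator via an auxiliary function vanishing at $0$ with positive derivative. The only minor difference is that the paper factors the numerator as $2\cosh(t)\bigl(t\cosh(t)-\sinh(t)\bigr)$, so its auxiliary function $t\cosh(t)-\sinh(t)$ needs just one differentiation, whereas your unfactored $h(t)=t\cosh(2t)-\sinh(2t)+t$ requires two.
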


\begin{proof}
We have that
\begin{equation*}
\begin{split}
    f'(t) & = \cosh^2(t)+\sinh^2(t)-1\\
    & = 2\sinh^2(t)
\end{split}
\end{equation*}
which is positive for $t>0$. Since $f(0)=0$, it follows that $f(t)>0$ for $t>0$. 

Now 
\begin{equation*}
\begin{split}
     g'(t) = \frac{f'(t)t^2-2tf(t)}{t^4} &= \frac{2t(\sinh^2(t)+1)-2\sinh(t)\cosh(t)}{t^3}\\
     &= \frac{2t\cosh^2(t)-2\sinh(t)\cosh(t)}{t^3} \\
     & = \frac{2\cosh(t) \left( t\cosh(t)-\sinh(t) \right)}{t^3}. 
\end{split}
\end{equation*}
Let $h(t)= t\cosh(t)-\sinh(t)$. Then $h(0)=0$ and $h'(t)=t\sinh(t)>0$ for $t>0$, and so $f(t)>0$ for $t>0$. 
It follows that $g'(t)>0$ for $t>0$.
\end{proof}

\begin{lemma}\label{IntersectionHeight}
Let $x(a,b)$ be the unique positive solution of 
\begin{equation*}
     a\tanh(ax)=b\coth(bx)
\end{equation*}
for $a\geq b>0$. Let 
\begin{equation*}
     u(a,b) =a\tanh(ax(a,b)).
\end{equation*}
Then $u(a,b) <u(a+c,b-c)$ for $a\geq b>c>0$.
\end{lemma}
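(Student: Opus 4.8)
The plan is to reduce everything to the pointwise statement that, on the open region $\Omega=\{a>b>0\}$, the function $u$ is smooth and satisfies $\partial u/\partial a>\partial u/\partial b$. Granting this, the segment $\gamma(t)=(a+tc,b-tc)$, $t\in[0,1]$, lies in $\Omega$ whenever $a>b>c>0$ (the second coordinate stays $\ge b-c>0$ and the first coordinate is always strictly larger than the second), so
\[
   u(a+c,b-c)-u(a,b)=c\int_0^1\left(\frac{\partial u}{\partial a}(a+tc,b-tc)-\frac{\partial u}{\partial b}(a+tc,b-tc)\right)dt>0 ,
\]
which is the lemma. (Since $x(a,b)$ exists only for $a>b$, the hypothesis should be read as $a>b>c>0$; this is the only case that arises later.)

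To get the pointwise inequality I would first apply the implicit function theorem to $G(a,b,x)=a\tanh(ax)-b\coth(bx)$: its $x$-derivative is $G_x=a^2\sech^2(ax)+b^2\csch^2(bx)>0$, so $x(a,b)$, and hence $u(a,b)=a\tanh\bigl(a\,x(a,b)\bigr)$, is smooth on $\Omega$. Writing $u=u(a,b)$ and using both $u=a\tanh(ax)$ and $u=b\coth(bx)$ (which also force $b<u<a$), one gets the identities $a^2\sech^2(ax)=a^2-u^2$ and $b^2\csch^2(bx)=u^2-b^2$, so in particular $G_x=a^2-b^2$. Differentiating $G=0$ together with $u=a\tanh(ax)$ and using this cancellation gives
\[
   \frac{\partial u}{\partial a}=\frac{A\,(u^2-b^2)}{a^2-b^2},\qquad \frac{\partial u}{\partial b}=\frac{B\,(a^2-u^2)}{a^2-b^2},
\]
where $A=\tanh(ax)+ax\sech^2(ax)$ and $B=\coth(bx)-bx\csch^2(bx)$. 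Setting $\alpha=ax$ and $\beta=bx$ (so $\alpha>\beta>0$ and $\alpha/\beta=a/b$), these read $A=\dfrac{\sinh\alpha\cosh\alpha+\alpha}{\cosh^2\alpha}>0$ and $B=\dfrac{\sinh\beta\cosh\beta-\beta}{\sinh^2\beta}$, the positivity of $B$ being exactly the statement $f(\beta)>0$ of Lemma \ref{FunctionLem}.

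Thus $\partial u/\partial a>\partial u/\partial b$ is equivalent to $A(u^2-b^2)>B(a^2-u^2)$, and substituting $u^2-b^2=b^2/\sinh^2\beta$, $a^2-u^2=a^2/\cosh^2\alpha$ and clearing the positive factor $\cosh^2\alpha\sinh^2\beta$ turns this into $b^2(\sinh\alpha\cosh\alpha+\alpha)>a^2(\sinh\beta\cosh\beta-\beta)$. Using $a/b=\alpha/\beta$ and dividing by $b^2$ and then by $\alpha^2$, it becomes
\[
   \frac{\sinh\alpha\cosh\alpha+\alpha}{\alpha^2}>\frac{\sinh\beta\cosh\beta-\beta}{\beta^2}=g(\beta),
\]
and the left-hand side is larger than $g(\alpha)=\dfrac{\sinh\alpha\cosh\alpha-\alpha}{\alpha^2}$, which in turn exceeds $g(\beta)$ because $\alpha>\beta>0$ and $g$ is strictly increasing by Lemma \ref{FunctionLem}. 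The hard part is the implicit differentiation: one has to spot the cancellation $a^2\sech^2(ax)+b^2\csch^2(bx)=a^2-b^2$ that makes the formulas for the partial derivatives usable, and then recognize that in the variables $\alpha,\beta$ the resulting inequality is nothing but the monotonicity of $g$ already established in Lemma \ref{FunctionLem}.
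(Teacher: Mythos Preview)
Your proof is correct and follows essentially the same route as the paper's: compute $\partial u/\partial a$ and $\partial u/\partial b$ by implicit differentiation, reduce $\partial u/\partial a>\partial u/\partial b$ to the inequality $b^2(\sinh\alpha\cosh\alpha+\alpha)>a^2(\sinh\beta\cosh\beta-\beta)$, and finish with the monotonicity of $g$ from Lemma~\ref{FunctionLem}. Your identity $a^2\sech^2(ax)+b^2\csch^2(bx)=a^2-b^2$ is a clean way to package the computation (the paper instead forms the ratio $(\partial u/\partial a)/(\partial u/\partial b)$ and shows it exceeds $1$), and your remark that the hypothesis should really read $a>b>c>0$ is well taken, since $x(a,b)$ is only defined for $a>b$; the applications in the paper only use this strict case.
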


\begin{proof}
Differentiating the first equation with respect to $a$ yields
\begin{equation*}
      \tanh(ax)+a\sech^2(ax)\left(x+a\frac{\partial x}{\partial a}\right) 
      =-b^2\csch^2(bx)\cdot\frac{\partial x}{\partial a}
\end{equation*}
and so
\begin{equation*}
       \frac{\partial x}{\partial a} =\frac{-\tanh(ax)-ax\sech^2(ax)}{a^2\sech^2(ax)+b^2\csch^2(bx)}<0.
\end{equation*}
Similarly, 
\begin{equation*}
      \frac{\partial x}{\partial b} 
      = \frac{\coth(bx)-bx\csch^2(bx)}{a^2\sech^2(ax)+b^2\csch^2(bx)} 
      = \frac{\sinh(bx)\cosh(bx)-bx}{\sinh^2(bx)(a^2\sech^2(ax)+b^2\csch^2(bx))}>0,
\end{equation*} 
where we have used Lemma \ref{FunctionLem} to conclude its sign.

Now, since $u(a,b)=b\coth(bx(a,b))$ and $\frac{\partial x}{\partial a}<0$,
\begin{equation*}
      \frac{\partial u}{\partial a}=-b^2\csch^2(bx)\cdot\frac{\partial x}{\partial a}>0.
\end{equation*} 
Similarly,
\begin{equation*}
      \frac{\partial u}{\partial b} = a^2\sech^2(ax)\cdot\frac{\partial x}{\partial b}>0.
\end{equation*}
Hence,
\begin{equation*}
      \frac{\left(\frac{\partial u}{\partial a}\right)}{\left(\frac{\partial u}{\partial b}\right)} 
      = \frac{b^2(\sinh(ax)\cosh(ax)+ax)}{a^2(\sinh(bx)\cosh(bx)-bx)}
      >\frac{b^2(\sinh(ax)\cosh(ax)-ax)}{a^2(\sinh(bx)\cosh(bx)-bx)}\geq1
\end{equation*}
by Lemma \ref{FunctionLem} since $a\geq b$. Note that the inequality is strict when $a>b$. Thus, for $f(t) = u(a+t,b-t)$, 
\begin{equation*}
      f'(t)= \frac{\partial u}{\partial a}-\frac{\partial u}{\partial b},
\end{equation*}
and so $f'(t)>0$ for $t>0$. Hence $u(a,b)<u(a+c,b-c)$ for $a\geq b>c>0$.
\end{proof}

\begin{corollary}\label{FirstIntersectionMax}
For $k\geq l>c>0$ we have that
\begin{equation*}
      \bl_k(T_{k,l})<\bl_{k+c}(T_{k+c,l-c}).
\end{equation*}
\end{corollary}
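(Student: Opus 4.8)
The plan is to translate Corollary~\ref{FirstIntersectionMax} directly into the setup of Lemma~\ref{IntersectionHeight} and then invoke that lemma. First I would unwind the definitions: recall that $T_{k,l}$ is the unique positive number with $\bl_k(T_{k,l})=\bm_l(T_{k,l})$, i.e., using the explicit formulas $\bl_k(T)=4\pi k\tanh(2kT)$ and $\bm_l(T)=2\pi(2l-1)\coth((2l-1)T)$, the number $T_{k,l}$ satisfies
\[
      4\pi k\tanh(2kT_{k,l}) = 2\pi(2l-1)\coth((2l-1)T_{k,l}).
\]
Writing $a=2k$, $b=2l-1$, and $x=T_{k,l}$, this is exactly the equation $a\tanh(ax)=b\coth(bx)$ of Lemma~\ref{IntersectionHeight}, after dividing by $2\pi$ and noting that the right-hand side there is $b\coth(bx)$ while here it is $\tfrac{1}{2}\cdot 2(2l-1)\coth((2l-1)x)$; so with the identification $x(a,b)=T_{k,l}$ one has $\bl_k(T_{k,l}) = 2\pi a\tanh(ax(a,b)) = 2\pi\,u(a,b)$ in the notation of Lemma~\ref{IntersectionHeight}.

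Next I would apply Lemma~\ref{IntersectionHeight} with the shift parameter. For $k\geq l > c > 0$, set $a=2k$, $b=2l-1$, and shift by $t=2c$: then $a+t = 2(k+c)$ and $b-t = 2(l-c)-1$, which are precisely the parameters corresponding to $T_{k+c,l-c}$, and $\bl_{k+c}(T_{k+c,l-c}) = 2\pi\,u(a+2c, b-2c)$. The hypothesis of the lemma requires $a\geq b$, i.e. $2k\geq 2l-1$, which holds since $k\geq l$; and it requires $b-2c>0$, i.e. $2l-1>2c$, which holds since $l>c$ gives $2l\geq 2c+2>2c+1$, so $2l-1>2c$. (One should also check $b>2c$ is what the lemma literally needs for the strict conclusion; here $b=2l-1>2c$ as just shown.) Then Lemma~\ref{IntersectionHeight} yields $u(a,b) < u(a+2c,b-2c)$, and multiplying by $2\pi$ gives $\bl_k(T_{k,l}) < \bl_{k+c}(T_{k+c,l-c})$, which is the claim.

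One subtlety to handle carefully is the parity/shift bookkeeping: the $\lambda$ eigenvalues use the parameter $2k$ (even) and the $\mu$ eigenvalues use $2l-1$ (odd), so shifting $k\mapsto k+c$ and $l\mapsto l-c$ changes these to $2(k+c)$ and $2(l-c)-1 = (2l-1)-2c$, i.e. both the "$a$" and "$b$" parameters move by $\pm 2c$ as required by $u(a+t,b-t)$ with $t=2c$. I would state this identification explicitly so the reader sees that the single-variable shift in Lemma~\ref{IntersectionHeight} matches the two simultaneous index shifts here. The only real content is Lemma~\ref{IntersectionHeight}, which has already been proved; the rest is the (entirely routine, but worth spelling out) verification that the hypotheses $a\geq b$ and $b-t>0$ translate to $k\geq l$ and $l>c$. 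I do not expect any genuine obstacle — the corollary is essentially a restatement of Lemma~\ref{IntersectionHeight} in the eigenvalue notation.
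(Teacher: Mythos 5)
Your proposal is correct and follows the same route as the paper: identify $\bl_k(T_{k,l})=2\pi\,u(2k,2l-1)$ in the notation of Lemma~\ref{IntersectionHeight}, apply that lemma with shift $2c$, and check that $k\geq l>c$ (as integers) gives $2k\geq 2l-1>2c$ so the lemma's hypotheses hold. The paper states this in one line; your version just spells out the bookkeeping, which is the only content beyond the lemma itself.
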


\begin{proof}
By Lemma \ref{IntersectionHeight}, for $k\geq l>c>0$ we have that $u(2k,2l-1)<u(2k+2c,2l-1-2c)$. Hence $\bl_{k}(T_{k,l})<\bl_{k+c}(T_{k+c,l-c})$.
\end{proof}

In particular, this tells us that 
\begin{equation*}
     \bl_{k-j+1}(T_{k-j+1,j})<\bl_k(T_{k,1}),
\end{equation*}
for $2\leq j<s$ and, when $k$ is odd, 
\begin{equation*}
     \bl_{(k+1)/2}(T_{(k+1)/2,(k+1)/2})< \bl_k(T_{k,1}).
\end{equation*}
So, when $k$ is odd, by Lemma \ref{SigmaBound} we have that $\bs_{2k-1}(T)=\bs_{2k}(T)\leq \bl_{k}(T_{k,1})$, and 
when $k$ is even, $\bs_{2k-1}(T)=\bs_{2k}(T)\leq \max(\bl_{k/2}(\infty), \bl_k(T_{k,1}))$. 

\begin{lemma}\label{NoAsymptote}
For $k\geq 2$ even, 
\begin{equation*}
\bl_{k/2}(\infty)<\bl_{k}(T_{k,1})
\end{equation*}
\end{lemma}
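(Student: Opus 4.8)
The plan is to reduce the stated inequality to an elementary estimate for $2kT_{k,1}$. By Lemma \ref{LambdaMuMonotone}(iii) we have $\bl_{k/2}(\infty)=4\pi(k/2)=2\pi k$ (here $k$ even is exactly what makes $\bl_{k/2}$ a genuine eigenvalue, with $k/2\in\mathbb{N}$), whereas $\bl_k(T_{k,1})=4\pi k\tanh(2kT_{k,1})$ by definition. Hence the assertion $\bl_{k/2}(\infty)<\bl_k(T_{k,1})$ is equivalent to $\tanh(2kT_{k,1})>\frac12$, i.e. to $2kT_{k,1}>\frac12\ln 3$.

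Next I would feed in the equation defining $T_{k,1}$. Writing $\bl_k(T_{k,1})=\bm_1(T_{k,1})$ out gives $2k\tanh(2kT_{k,1})=\coth(T_{k,1})$. Since $\tanh s<s$ for $s>0$, we have $\coth(T_{k,1})>1/T_{k,1}$, and therefore
\[
     (2kT_{k,1})\,\tanh(2kT_{k,1}) = T_{k,1}\coth(T_{k,1}) > 1 .
\]

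Finally I would close the argument by contradiction. Set $x=2kT_{k,1}>0$. If $\tanh x\le\frac12$ then, since $\tanh$ is increasing and $\tanh(\tfrac12\ln 3)=\frac12$, we get $x\le\frac12\ln 3$, whence $x\tanh x\le \frac12\ln 3\cdot\frac12=\frac14\ln 3<1$, contradicting the displayed inequality. Thus $\tanh(2kT_{k,1})>\frac12$, which is what was needed. There is essentially no obstacle here: the key step is simply recognizing the equivalence with $\tanh(2kT_{k,1})>\frac12$, and the only points requiring a little care are that $k$ even makes $\bl_{k/2}$ meaningful and that the monotonicity of $\tanh$ is applied in the correct direction. (Alternatively, $(2kT_{k,1})\tanh(2kT_{k,1})>1$ forces $2kT_{k,1}>t_{1,0}$, where $t_{1,0}$ solves $t\tanh t=1$, and then $\tanh(2kT_{k,1})>\tanh(t_{1,0})=1/t_{1,0}>\frac12$ since $t_{1,0}<2$.)
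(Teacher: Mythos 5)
Your reduction to $\tanh(2kT_{k,1})>\tfrac12$ and your use of $\coth T>1/T$ are exactly the two facts the paper's proof also hinges on, so the core idea is the same. The execution differs, and yours is a bit leaner: the paper first introduces an auxiliary threshold $T_k$ defined by $2k\tanh(2kT_k)=1/T_k$, observes $2kT_k=T_{1/2}$ (the root of $s\tanh s=1$, which is the same number as your $t_{1,0}$), shows $\bl_{k/2}(\infty)<4\pi k\tanh(2kT_k)$ using $T_{1/2}<2$, and then runs a separate monotonicity/intersection argument to conclude $T_k<T_{k,1}$. You skip the auxiliary point entirely: multiplying the defining relation $2k\tanh(2kT_{k,1})=\coth(T_{k,1})$ by $T_{k,1}$ gives $(2kT_{k,1})\tanh(2kT_{k,1})=T_{k,1}\coth(T_{k,1})>1$ in one line, and then either the elementary $\tfrac14\ln 3<1$ contradiction or the comparison with $t_{1,0}<2$ finishes it. Your parenthetical alternative is in effect the paper's argument with the scaffolding removed; your main argument trades the intersection-point comparison for a single numerical estimate. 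Both are correct; yours is the more direct write-up of the same underlying mechanism.
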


\begin{proof}
Let $T_{k}$ be the unique positive number such that 
\[
      2k \tanh(2kT_{k}) = \frac{1}{T_{k}}.
\]
Then $\tanh(2kT_{k})=1/(2kT_{k})$ and so $T_{1/2}=2kT_{k}$. Also note that $T_{1/2}$ is the unique positive solution of $T=\coth T$ and $T_{1/2} \approx 1.2 < 2$. Therefore,
\begin{equation} \label{equation:bound}
     \bl_{\frac{k}{2}}(\infty)  =\frac{4\pi k}{2} 
     < \frac{4\pi k}{T_{1/2}} 
     = \frac{2\pi}{T_{k}} 
     = 4 \pi k \tanh(2k T_{k}).
\end{equation}
If $f(T)=\tanh T$, then $f'(T)=\sech^2 T$ and we have $f'(0)=1$ and $f'(T) < 1$ for $T>0$. Hence, $\tanh T < T$, or equivalently $1/T < \coth T$, for $T>0$. Arguing as in Lemma \ref{TklExist} and Lemma \ref{TklLem}, since $\lambda_k(T)=4\pi k \tanh(2kT)$ is increasing in $T$, $1/T$ and $\coth T$ are decreasing in $T$, and $1/T < \coth T$ for $T >0$, it follows that $T_{k} < T_{k,1}$. Using this  in (\ref{equation:bound}), since $\tanh$ is increasing, we have that
\[
     \bl_{\frac{k}{2}}(\infty) < 4 \pi k \tanh(2k T_{k}) < 4 \pi k \tanh(2k T_{k,1}) = \bl_k(T_{k,1}).
\]
\end{proof}

Using these results, we are now ready to prove Theorem \ref{theorem:mobius}. We first determine the supremum $\sigma^{S^1}_k$ of the $k$-th normalized Steklov eigenvalue among all $S^1$-invariant metrics on the M\"obius band, $\sigma^{S^1}_k=\sup_{T>0} \bs_k(T)$, and show that we can always find an $S^1$-invariant metric that achieves $\sigma^{S^1}_k$. We then use the eigenfunctions corresponding to these maximal Steklov eigenvalues to construct embedded free boundary minimal M\"{o}bius bands in $\mathbb{B}^4$.

\begin{proof}[Proof of Theorem \ref{theorem:mobius}]
It follows directly from Lemma \ref{SigmaBound}, Lemma \ref{FirstIntersectionMax} and Lemma \ref{NoAsymptote} that for each $k \geq 1$, 
\[
      \sigma^{S^1}_{2k-1}=\sigma^{S^1}_{2k}=4\pi \tanh(2kT_{k,1}),
\]
and is attained precisely when $T=T_{k,1}$.

Consider  $u: [-T_{k,1},T_{k,1}] \times S^1 / \sim \; \rightarrow \; \mathbb{R}^4$ given by
\[
     u(t,\theta)
     =\left( 2k \sinh(t) \cos(\theta), 2k \sinh(t) \sin(\theta), \cosh(2kt) \cos(2k \theta), \cosh(2kt) \sin(2k \theta) \right).
\]
First observe that 
\[
    |u(t,\theta)|^2=4k^2 \sinh^2(t)+\cosh^2(2kt).
\]
Suppose $u(t,\theta)=u(t',\theta')$. Then $|u(t,\theta)|^2=|u(t',\theta')|^2$, and if $t \ne 0$ the inverse function theorem implies that $t=t'$. If $t=0$ then we see immediately from the expression for $u$ that $t'=0$. In either case, $t=t'$, and the expression for $u$ implies that $\cos \theta=\cos \theta'$ and $\sin \theta=\sin \theta'$, so $\theta=\theta'$. Therefore, $u$ is an embedding.

Since the component functions of $u$ are Steklov eigenfunctions, they are harmonic functions. Furthermore, 
\begin{equation*}
\begin{split}
        \frac{\partial u}{\partial t} 
        & = \left(2k\cosh(t)\cos(\theta),2k\cosh(t)\sin(\theta), 2k\sinh(2kt)\cos(k\theta), k\sinh(2kt)\sin(2k\theta)\right),\\
       \frac{\partial u}{\partial \theta} 
       & =\left(-2k\sinh(t)\sin(\theta),2k\sinh(t)\cos(\theta),-2k\cosh(2kt)\sin(k\theta),2k\cosh(2kt)\cos(2k\theta)\right).
\end{split}
\end{equation*}
It follows that 
\[
      \frac{\partial u}{\partial t} \cdot\frac{\partial u}{\partial \theta}=0
\]
and 
\[
    \left|\frac{\partial u}{\partial t}\right|^2
    = 4k^2 \left( \cosh^2(t) + \sinh^2(2kt) \right)
    = 4k^2 \left( \sinh^2(t) + \cosh^2(2kt) \right)
    =\left| \frac{\partial u}{\partial \theta} \right|^2.
\]
Hence, $u$ is also conformal. Therefore $u$ is a minimal embedding. Moreover, $|u|$ is constant on $\partial M_{T_{k,1}}$, equal to 
\[
       R_k:=\sqrt{4k^2 \sinh^2(T_{k,1})+\cosh^2(2kT_{k,1})}. 
\]
It follows from the maximum principle that $u$ defines a surface contained in a ball centred at the origin of radius $R_{k}$, with boundary lying on the boundary of the ball. Furthermore, each component function of $u$ is a Steklov eigenfunction with eigenvalue $4\pi \tanh(2kT_{k,1})$, and so on $\partial M_{T_{k,1}}$ we have that 
\[
     \frac{\partial u}{\partial \eta} = 4\pi \tanh(2kT_{k,1}) \, u
\]
is orthogonal to the boundary of the ball. Therefore, $u(M_{T_{k,1}})$ is an embedded free boundary minimal surface in the ball of radius $R_k$. We may then rescale $u$ to obtain a free boundary minimal embedding of the M\"obius band into $\mathbb{B}^4$,
\[
   \tilde{u}:=\frac{1}{R_k} u: [-T_{k,1},T_{k,1}] \times S^1 / \sim \; \rightarrow \; \mathbb{B}^4.
\]
\end{proof}

\section{Critical metrics on the annulus and M\"obius band} \label{section:critical}

In this section we prove more generally that the critical metrics of the Steklov eigenvalues among $S^1$-invariant metrics on the M\"obius band are given by explicit embedded free boundary minimal M\"obius bands in $\mathbb{B}^4$, using the analysis of the Steklov spectrum of $S^1$-invariant metrics on the M\"obius band from the previous section. Similarly, we show that the critical metrics of the Steklov eigenvalues among $S^1$-invariant metrics on the annulus are given by explicit free boundary minimal annuli in $\mathbb{B}^3$ and $\mathbb{B}^4$, using the analysis of \cite{FTY}. As a consequence, we obtain new explicit families of free boundary minimal annuli and M\"obius bands in $\mathbb{B}^4$. Although it is relatively clear in our specific context, we begin this section by giving a precise definition of what we mean by `critical metric'. 

It follows from the explicit analysis of the Steklov eigenvalues of $S^1$-invariant metrics on the annulus and M\"obius band (\cite[Section 3]{FS1}, \cite{FTY}, \cite[Section 7]{FS3}, and Section \ref{section:mobius}) that the multiplicity of any Steklov eigenvalue of an $S^1$-invariant metric is either 1, 2, or 3 for the annulus, and either 2 or 4 for the M\"obius band. Moreover, the analysis shows that if $g(t)$ is a smooth family of $S^1$-invariant metrics on $M$ with $g(0)=g$, and $m$ is the dimension of the eigenspace $E_k(g)$, then there are smooth functions $\Lambda_1(t), \dots, \Lambda_m(t)$ such that $\Lambda_i(t)$, $i=1, \ldots, m$, is an eigenvalue of $(M,g(t))$, $\Lambda_1(0)= \cdots = \Lambda_m(0)=\sigma_k(g)$, and there exist $\delta>0$ and $1 \leq p, \, q \leq m$ such that 
\[
       \sigma_k(g(t))=\begin{cases} \Lambda_p(t) & \mbox{ for } t \in (-\delta, 0) \\
                                                      \Lambda_q(t) & \mbox{ for } t \in (0,\delta).
                               \end{cases}
\]
Furthermore, left and right hand derivatives of $\sigma_k(t)$ exist at $t=0$ and
\[
     \dot{\sigma}_k(0^-)=\dot{\Lambda}_p(0)
\]
and
\[
     \dot{\sigma}_k(0^+)=\dot{\Lambda}_q(0).
\]

\begin{definition} \label{definition:critical}
We say that an $S^1$-invariant metric $g$ on $M$ (the annulus or M\"obius band) is {\em critical} for the functional $\sigma_k$ on the space of $S^1$-invariant metrics on $M$ if for any smooth family of metrics $g(t)$ with $g(0)=0$ with fixed boundary length, $ \dot{\sigma}_k(0^-) \cdot \dot{\sigma}_k(0^+) \leq 0$.
\end{definition}

It is clear that if $g$ is a locally maximizing or locally minimizing metric of $\sigma_k$ on the space of $S^1$-invariant metrics, then $g$ is a critical metric. It follows from the explicit analysis of $S^1$-invariant metrics on the annulus and M\"obius band that these are the only types of critical metrics.

We first consider the case of the M\"obius band and prove Theorem \ref{theorem:critical}.

\begin{proof}[Proof of Theorem \ref{theorem:critical}]
It follows from the proof of Lemma \ref{SigmaBound} that for each $k \geq 1$ and $0 \leq j \leq s-1$, 
$\bs_{2k-1}(T)=\bs_{2k}(T)$ has a local maximum of $\bl_{k-j}(T_{k-j,j+1})$ at $T=T_{k-j,j+1}$
and a local minimum of $\bl_{k-j-1}(T_{k-j-1,j+1})$ at $T=T_{k-j-1,j+1}$, and for $k$ odd and $j=s$, $\bs_{2k-1}(T)=\bs_{2k}(T)$ has a local maximum of $\bl_{k-s}(T_{k-s,s+1})$ at $T=T_{k-s,s+1}$. Specifically, the critical points of the normalized Steklov eigenvalues $\bs_i(T)$ are $T=T_{k,l}$ for all $k \geq l$, and the critical values $\bs_i(T_{k,l})$ have multiplicity four with eigenspace spanned by
\begin{equation*}
\begin{cases}
    x(t,\theta)=\sinh((2l-1)t)\cos((2l-1)\theta)&\\
    y(t,\theta)=\sinh((2l-1)t)\sin((2l-1)\theta)&\\
    z(t,\theta)=\cosh(2kt)\cos(2k\theta)&\\
    w(t,\theta)=\cosh(2kt)\sin(2k\theta).&\\
\end{cases}
\end{equation*}
Consider  $u: [-T_{k,l},T_{k,l}] \times S^1 / \sim \; \rightarrow \; \mathbb{R}^4$ given by
\[
     u(t,\theta)
     =\frac{1}{R_{k,l}} \left( 2k \, x(t,\theta), 2k \, y(t,\theta), (2l-1) \, z(t,\theta), (2l-1) \, w(t,\theta) \right)
\]
where 
\[
       R_{k,l}:=\sqrt{4k^2 \sinh^2((2l-1)T_{k,l})+(2l-1)^2\cosh^2(2kT_{k,l})}. 
\]
As in the proof of Theorem \ref{theorem:mobius}, $u$ is a free boundary minimal embedding of the M\"obius band $M_{T_{k,l}}$ into $\mathbb{B}^4$.
\end{proof}

In a similar way, we are also able to characterize the critical metrics of the normalized Steklov eigenvalues among $S^1$-invariant metrics on the annulus, using the analysis of \cite{FTY}.

\begin{theorem} \label{theorem:critical-annulus}
The critical metrics of the normalized Steklov eigenvalues $\bar{\sigma}_k$ for $k \geq 1$ over all $S^1$-invariant metrics on the annulus are (up to $\sigma$-homothety) the induced metrics on: 

\vspace{2mm}

(i) 
The critical $n$-catenoid given by the immersion
$u: [-t_{1,0}/n,t_{1,0}/n] \times S^1 \rightarrow \mathbb{B}^3$
\[
    u(t,\theta) =  \frac{1}{r_n}(\cosh(nt)\cos(n\theta), \cosh(nt)\sin(n\theta), nt)
\]
for $n=1, \, 2, \ldots$, where $t_{1,0}$ is the unique positive solution of $ \tanh t=1/t$ and
\[
     r_{n}=\sqrt{\cosh^2(n t_{1,0}) +t_{1,0}^2}.
\]

\vspace{2mm}

(ii)
The free boundary minimal annuli given by the immersions
$u: [-t_{m,n},t_{m,n}] \times S^1  \; \rightarrow \; \mathbb{B}^4$ 
where  $u( t, \theta)=$
\[
     \frac{1}{r_{m,n}}\left( m\sinh(nt) \cos(n\theta), m\sinh(nt) \sin(n\theta), 
         n\cosh(mt) \cos(m \theta), n\cosh(mt) \sin(m \theta) \right)
\]
for any $m, \, n \in \mathbb{N}$ with $m > n$, where $t_{m,n}$ is the unique positive solution of $m \tanh(mt)=n\coth(nt)$ and
\[
     r_{m,n}=\sqrt{m^2\sinh^2(n t_{m,n})+n^2\cosh^2(m t_{m,n})}.
\]
\end{theorem}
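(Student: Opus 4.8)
The plan is to run the argument used for the M\"obius band in the proof of Theorem~\ref{theorem:critical}, replacing the spectral analysis of Section~\ref{section:mobius} by the analysis of rotationally symmetric metrics on the annulus from \cite{FTY}. First I would record, by separation of variables on $M_T=[-T,T]\times S^1$ with an $S^1$-invariant metric $g=f(t)^2(dt^2+d\theta^2)$ exactly as in Section~\ref{section:mobius}, the full list of nonzero normalized Steklov eigenvalues. Since for the annulus there is no identification $(t,\theta)\sim(-t,\theta+\pi)$, for each mode $k\geq1$ both branches $\cosh(kt)$ and $\sinh(kt)$ survive, giving $\bar\lambda_k(T)=4\pi k\tanh(kT)$ with eigenfunctions $\cosh(kt)\cos(k\theta),\ \cosh(kt)\sin(k\theta)$ and $\bar\mu_k(T)=4\pi k\coth(kT)$ with eigenfunctions $\sinh(kt)\cos(k\theta),\ \sinh(kt)\sin(k\theta)$, each of multiplicity two; in addition the mode-zero eigenfunction $t$ gives $\bar\nu(T)=4\pi/T$ of multiplicity one. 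As in the M\"obius case these depend only on the conformal modulus $T$ and not on $f$, so by Definition~\ref{definition:critical} an $S^1$-invariant metric is critical for $\bar\sigma_k$ precisely when its modulus $T$ is a critical point of the piecewise-smooth function $T\mapsto\bar\sigma_k(T)$ obtained by listing the branches above in nondecreasing order.

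The second step is the branch bookkeeping. Each $\bar\lambda_k$ is strictly increasing and each $\bar\mu_k$, $\bar\nu$ strictly decreasing (derivatives $4\pi k^2\sech^2(kT)$, $-4\pi k^2\csch^2(kT)$, $-4\pi/T^2$), the maps $k\mapsto\bar\lambda_k(T)$ and $k\mapsto\bar\mu_k(T)$ are strictly increasing (the latter because $\sinh x\cosh x>x$, i.e.\ Lemma~\ref{FunctionLem}), and $T\coth T>1$; hence no two $\bar\lambda$'s meet, no two $\bar\mu$'s meet, and $\bar\nu$ meets no $\bar\mu_k$. The only crossings are therefore $\bar\lambda_m=\bar\mu_n$, which (since $\tanh<1<\coth$ forces $m>n$) occurs at the unique $T=t_{m,n}$ with $m\tanh(mT)=n\coth(nT)$, and $\bar\lambda_n=\bar\nu$, which occurs at $nT\tanh(nT)=1$, i.e.\ $T=t_{1,0}/n$; each is transversal because the difference of the two branches is strictly monotone (this also yields the uniqueness asserted in the theorem, as in Lemma~\ref{TklExist}). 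Ordering the branches as in Lemmas~\ref{LambdaMuMonotone}--\ref{SigmaBound}, which is the content of \cite{FTY}, shows that $T\mapsto\bar\sigma_k(T)$ is smooth and strictly monotone between consecutive crossing values and has a corner at each crossing whose one-sided derivatives are those of the two crossing branches, one increasing and one decreasing; so $\dot\sigma_k(0^-)\cdot\dot\sigma_k(0^+)<0$ at a crossing, while $\dot\sigma_k(0^-)=\dot\sigma_k(0^+)\neq0$ away from crossings. Thus the critical metrics are exactly those with modulus $T=t_{1,0}/n$ ($n\geq1$), where the critical eigenvalue has multiplicity three with eigenspace spanned by $t,\ \cosh(nt)\cos(n\theta),\ \cosh(nt)\sin(n\theta)$, or $T=t_{m,n}$ ($m>n\geq1$), where it has multiplicity four with eigenspace spanned by $\sinh(nt)\cos(n\theta),\ \sinh(nt)\sin(n\theta),\ \cosh(mt)\cos(m\theta),\ \cosh(mt)\sin(m\theta)$.

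The third step identifies each critical metric with an explicit minimal immersion, exactly as in the proof of Theorem~\ref{theorem:mobius}: take $u=(\cosh(nt)\cos(n\theta),\cosh(nt)\sin(n\theta),nt)$ on $M_{t_{1,0}/n}$ in the first case and $u=(m\sinh(nt)\cos(n\theta),m\sinh(nt)\sin(n\theta),n\cosh(mt)\cos(m\theta),n\cosh(mt)\sin(m\theta))$ on $M_{t_{m,n}}$ in the second. The relative weights --- the factor $n$ on the last coordinate in the first case and the factors $m,n$ in the second --- are precisely what is needed to make $|u_t|^2=|u_\theta|^2$, which together with $u_t\cdot u_\theta=0$ shows $u$ is a conformal harmonic, hence minimal, map; $|u_t|^2\geq n^2>0$ (resp.\ $\geq m^2n^2>0$) shows it is an immersion; $|u|^2$ does not depend on $\theta$, hence is constant on $\partial M_T$, so the maximum principle places the image in the ball of that radius with $u(\partial M_T)$ on its boundary sphere; and since every coordinate is a Steklov eigenfunction with the common eigenvalue $\sigma$ forced by the crossing condition, $\partial u/\partial\eta=\sigma u$ is normal to that sphere along $\partial M_T$, so after rescaling to $\mathbb{B}^3$ (resp.\ $\mathbb{B}^4$) this is a free boundary minimal immersion whose induced metric is $\sigma$-homothetic to the critical metric. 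Conversely each listed surface arises this way; unlike the M\"obius band case there is no parity restriction, so all $n\geq1$ and all $m>n\geq1$ occur (when $m$ is even and $n$ odd the second immersion double-covers the embedded M\"obius band of Theorem~\ref{theorem:critical}, and for $n\geq2$ the first $n$-fold-covers the critical catenoid, which is why only immersions are claimed).

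I expect the main obstacle to be the second step: reproving the annulus analogue of Lemma~\ref{SigmaBound}, i.e.\ a complete description of how the branches $\bar\lambda_k,\bar\mu_k,\bar\nu$ interlace as $T$ ranges over $(0,\infty)$, so as to be sure that every crossing really does produce a critical metric of some $\bar\sigma_k$ and that none is missed; as in Corollary~\ref{FirstIntersectionMax} and Lemma~\ref{NoAsymptote} this involves comparing the heights of the various crossings, and it is essentially carried out in \cite{FTY}, so the work is mostly citation and translation. The genuinely new content --- checking that the critical eigenfunctions assemble into the stated free boundary minimal immersions --- is then the routine verification indicated above.
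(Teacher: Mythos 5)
The step that fails is your very first one: for the annulus, unlike the M\"obius band, the normalized Steklov eigenvalues of an $S^1$-invariant metric do \emph{not} depend only on the conformal modulus $T$. An $S^1$-invariant metric $g=f(t)^2(dt^2+d\theta^2)$ on $[-T,T]\times S^1$ has no reason to satisfy $f(-t)=f(t)$ --- that symmetry is automatic for the M\"obius band because of the identification $(t,\theta)\sim(-t,\theta+\pi)$, but on the annulus the two boundary circles can have different lengths $2\pi f(T)$ and $2\pi f(-T)$. When they do, the harmonic eigenfunctions no longer decouple into the pure $\cosh(kt)$ and $\sinh(kt)$ branches you list (those are the eigenfunctions only in the symmetric case), and the normalized spectrum depends on a second parameter, which the paper and \cite{FTY} encode as $\beta=4\alpha/(1+\alpha)^2\in(0,1]$ with $\alpha=f(T)/f(-T)$. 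Since Definition \ref{definition:critical} requires $\dot\sigma_k(0^-)\cdot\dot\sigma_k(0^+)\le 0$ for \emph{every} smooth family of $S^1$-invariant metrics with fixed boundary length, including those that change $\beta$, your analysis of $T\mapsto\bar\sigma_k(T)$ at fixed $\beta=1$ is not enough: you must also rule out a nonzero derivative in the $\beta$-direction.

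This is precisely the extra input the paper draws from \cite{FTY}. Their Lemmas 2.4--2.5 give the critical points in $T$ at each fixed $\beta$ (the analogue of your branch bookkeeping), and the proof of their Lemma 2.6(i) shows $\tfrac{d}{d\beta}\bar\sigma_i(\beta,t_{m,n}(\beta))>0$, which --- together with $\beta\le 1$ with equality iff the two boundary lengths agree --- forces any critical metric to sit on the symmetric slice $\beta=1$. You would need to add this argument; otherwise you have shown that the metrics you exhibit are critical within the one-parameter symmetric family, but not that they exhaust (or even belong to) the set of critical metrics among all $S^1$-invariant metrics. Your third step (verifying that the eigenfunctions at each crossing assemble into the stated conformal harmonic immersions and that the relative weights make them isometric, not merely conformal, immersions by free boundary minimal surfaces) is correct and matches the paper's use of the Theorem \ref{theorem:mobius} argument; the gap is entirely in the spectral analysis in step two, where the two-parameter structure of the annulus problem must be confronted.
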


\begin{proof}
The proof follows as in the proof of Theorem \ref{theorem:critical}, using \cite[Lemmas 2.4, 2.5, 2.6]{FTY}. 
In the case of the annulus, the normalized Steklov eigenvalues of an $S^1$-invariant metric $g$ on the annulus $M$ depend not only on the conformal modulus $T >0$ of the annulus, 
but also on a parameter 
\[
     \beta = \frac{4 \alpha}{(1+\alpha)^2}, 
\]
where $\alpha$ is the ratio of the lengths of the two boundary components of the annulus $(M,g)$. We may assume $\alpha \geq 1$, in which case we have $0 < \beta \leq 1$ with $\beta = 1$ if and only if $\alpha=1$.
It follows from \cite[Lemmas 2.4, 2.5]{FTY} that the critical points (local maxima and minima)
of the normalized Steklov eigenvalues $\bs_i(\beta,T)$ among metrics with $\beta$ fixed are $T=t_{m,n}(\beta)$ (see \cite[Definition 2.1]{FTY}) for all $m$, $n$ with $m/n > \alpha$. However, by \cite[Proof of Lemma 2.6(i)]{FTY}, 
\[
      \frac{d}{d\beta} \bs_i(\beta,t_{m,n}(\beta)) >0.
\]
Therefore the critical points of the normalized Steklov eigenvalues among all $S^1$-invariant metrics are the points $(\beta,T)$ with $\beta=1$, $T=t_{m,n}(1)$ with $m > n$. For $n=0$, the corresponding critical value has multiplicity three with eigenspace spanned by
\begin{equation*}
\begin{cases}
    x(t,\theta)=\cosh(mt) \cos(m\theta)&\\
    y(t,\theta)=\cosh(mt)\sin(m\theta)&\\
    z(t,\theta)=t.&
\end{cases}
\end{equation*}
For $n \geq 1$, the corresponding critical value has multiplicity four with eigenspace spanned by
\begin{equation*}
\begin{cases}
    x(t,\theta)=\sinh(nt)\cos(n\theta)&\\
    y(t,\theta)=\sinh(nt)\sin(n\theta)&\\
    z(t,\theta)=\cosh(mt)\cos(m\theta)&\\
    w(t,\theta)=\cosh(mt)\sin(m\theta).&\\
\end{cases}
\end{equation*}
As in the proof of Theorem \ref{theorem:mobius}, we see that the critical metrics of the normalized Steklov eigenvalues are achieved by the induced metrics from the free boundary minimal immersions given in {\em(i)} and {\em (ii)}, respectively, in the statement of the theorem.
\end{proof}

\section{Classification of $S^1$-invariant free boundary annuli and M\"obius bands} \label{section:classification}

In this section we give an explicit classification of all $S^1$-invariant free boundary minimal annuli and M\"obius bands in $\mathbb{B}^n$. Specifically, we show that the free boundary minimal annuli in Theorem \ref{theorem:critical-annulus} and the free boundary minimal M\"obius bands of Theorem \ref{theorem:critical} are the only $S^1$-invariant free boundary minimal annuli and M\"obius bands in $\mathbb{B}^n$. Here, we say that a free boundary minimal annulus or M\"obius band is {\em $S^1$-invariant} if the induced metric is $S^1$-invariant (in the sense discussed in section \ref{section:mobius} and \cite{FTY}). In order to give the classification, we show that the induced metric on any $S^1$-invariant free boundary annulus or M\"obius band in $\mathbb{B}^n$ is critical for some normalized Steklov eigenvalue on the space of $S^1$-invariant metrics. The result then follows from Theorem \ref{theorem:critical} and Theorem \ref{theorem:critical-annulus}. 

Let $M$ be a compact surface with boundary, and let $g(t)$ be a smooth family of metrics on $M$ with $\dot{g}(t)=h(t)$, where $h(t)\in S^2(M)$ is a smooth family of symmetric $(0,2)$-tensor fields on $M$. Denote by $E_k(g(t))$ the eigenspace corresponding to the $k$-th Steklov  eigenvalue $\sigma_k(t)$ of $(M,g(t))$. In \cite[Lemma 2.5]{FS2} it is shown that $\sigma_k(t)$ is a Lipschitz function of $t$, and if $\dot{\sigma}_k(t)$ exists, then
\[
       \dot{\sigma}_k(t) = Q_h(u)
\] 
for any $u \in E_k(g(t))$ with $||u||_{L^2}=1$, where 
\[
      Q_h(u)=-\int_M \la \tau(u) , h \ra \; da_t  -\frac{\sigma_k(t)}{2} \int_{\p M} u^2 h(T,T) \; ds_t.
\]
Here $T$ is the unit tangent to $\p M$ for the metric $g(t)$ and $\tau(u)$ denotes the stress-energy tensor of $u$ with respect to the metric $g(t)$,
\[
    \tau(u)= du \otimes du -\frac{1}{2} |\n u|^2 g.
\]
For a family of metrics $g(t)$ with fixed boundary length, the length constraint on the boundary translates to 
\[
      \int_{\p M}h(T,T)\ ds_t=0.
\]      
We let $S^2_0(M,g)$ denote the space of smooth symmetric $(0,2)$-tensor fields on $M$ satisfying 
$\int_{\partial M} h(T,T) \ ds=0$, 
where $T$ is the unit tangent vector to $\partial M$ with respect to the metric $g$.

\begin{lemma} \label{lemma:continuous}
Let $g(t)$ be a smooth family of metrics on a compact surface $M$ with boundary, with $g(0)=g$.
Suppose $\sigma$ is a Steklov eigenvalue of $(M,g)$ with multiplicity $m$. Let $k_1< \cdots < k_m$ be such that 
$\sigma_{k_1}(g) = \cdots = \sigma_{k_m}(g)=\sigma$, and let
$\mathcal{E}(t)$ be the direct sum of the eigenspaces corresponding to the distinct eigenvalues among $\sigma_{k_1}(g(t)), \ldots, \sigma_{k_m}(g(t))$. Then the $L^2$-orthogonal projection
\[
      P_t: L^2(\partial M, g(t)) \rightarrow \mathcal{E}(t)
\]
is continuous at $t=0$.
\end{lemma}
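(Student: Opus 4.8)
The plan is to reduce the continuity of the spectral projection $P_t$ to the continuity of the eigenvalue branches and of the resolvent of the Dirichlet-to-Neumann operator $D_{g(t)}$, exploiting that $\sigma$ is an isolated eigenvalue of $D_g$ of finite multiplicity $m$. First I would recall that the Dirichlet-to-Neumann operator $D_{g(t)}$ acting on $L^2(\partial M, g(t))$ has discrete spectrum accumulating only at $+\infty$; since $\sigma = \sigma_{k_1}(g) = \cdots = \sigma_{k_m}(g)$ is isolated in the spectrum of $D_g$, there exist $\varepsilon>0$ and a positively oriented circle $\gamma$ in $\mathbb{C}$ of radius $\varepsilon$ about $\sigma$ such that $\gamma$ encloses no other eigenvalue of $D_g$, and $\gamma$ does not meet the spectrum of $D_g$. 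The first step is then to show that for $|t|$ small, $\gamma$ still avoids the spectrum of $D_{g(t)}$: this follows because $\sigma_k(t)$ is a Lipschitz (in particular continuous) function of $t$ for every $k$ (as used earlier in the excerpt, via \cite[Lemma 2.5]{FS2}), and the eigenvalues outside the interval $[\sigma-\varepsilon,\sigma+\varepsilon]$ cannot cross into it in a short time by continuity together with the fact that the eigenvalues are ordered and can be controlled below by a fixed $k$-independent bound on compact $t$-intervals. Hence for $|t|<\delta$ the eigenvalues of $D_{g(t)}$ inside $\gamma$ are exactly those among $\sigma_{k_1}(g(t)),\dots,\sigma_{k_m}(g(t))$, and $\mathcal{E}(t)$ is precisely the spectral subspace of $D_{g(t)}$ associated with the part of the spectrum inside $\gamma$.

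The second step is to represent the projection by the Riesz contour integral
\[
     P_t = \frac{1}{2\pi i} \oint_\gamma \left( z - D_{g(t)} \right)^{-1} \, dz,
\]
and to prove that $t \mapsto (z-D_{g(t)})^{-1}$ is continuous at $t=0$, uniformly for $z \in \gamma$, as a family of bounded operators. The subtlety here is that $D_{g(t)}$ acts on the $t$-dependent Hilbert space $L^2(\partial M, g(t))$, so I would first fix a reference identification: the boundary lengths are constant along the family (this is the standing assumption in the definition of critical metric and in the lemma's hypothesis via $S^2_0(M,g)$-variations), and more importantly one can use the natural isometry $L^2(\partial M, g(t)) \cong L^2(\partial M, g_0)$ given by multiplication by $(\rho_t)^{1/2}$ where $ds_{g(t)} = \rho_t \, ds_{g_0}$, with $\rho_t$ smooth in $t$ and $\rho_0 \equiv 1$. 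Conjugating $D_{g(t)}$ by this isometry produces a family $\widetilde{D}_t$ of self-adjoint operators on the fixed space $L^2(\partial M, g_0)$, all with the same domain (the relevant fractional Sobolev space $H^{1/2}$, which is unchanged since the $g(t)$ are comparable metrics on a compact manifold), and depending continuously on $t$ in, say, the gap/norm-resolvent sense. The third step is to establish this norm-resolvent continuity: for a fixed harmonic extension problem, the dependence of the harmonic extension of a boundary datum on the metric $g(t)$ is continuous in the relevant Sobolev norms by standard elliptic estimates (the coefficients of $\Delta_{g(t)}$ and the conormal derivative vary continuously, and in two dimensions harmonicity depends only on the conformal class, which simplifies matters further), so $\widetilde{D}_t \to \widetilde{D}_0$ in the norm-resolvent topology; hence $(z-\widetilde{D}_t)^{-1} \to (z-\widetilde{D}_0)^{-1}$ in operator norm uniformly for $z$ on the compact contour $\gamma$, which stays a fixed distance from $\operatorname{spec}(\widetilde{D}_t)$ for small $t$ by the first step.

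Combining these, the conjugated projections $\widetilde{P}_t = \frac{1}{2\pi i}\oint_\gamma (z-\widetilde{D}_t)^{-1}\,dz$ converge in operator norm to $\widetilde{P}_0 = P_0$ as $t \to 0$; undoing the unitary conjugation (which itself depends continuously on $t$ through the smooth positive factor $\rho_t$) gives that $P_t \to P_0$ strongly — indeed in the appropriate operator-norm sense after the identification — which is exactly the asserted continuity at $t=0$. I expect the main obstacle to be the careful bookkeeping in the first step: ruling out that an eigenvalue $\sigma_{k_m+1}(g(t))$ or $\sigma_{k_1-1}(g(t))$ migrates across the contour $\gamma$ for arbitrarily small $t$. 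This requires more than pointwise continuity of each individual branch; one needs a uniform (in the branch index, over a neighborhood of $t=0$) modulus of continuity, or equivalently a spectral-gap argument showing the total multiplicity inside $\gamma$ is locally constant. This can be handled by invoking that the counting function $N_{g(t)}(\lambda) = \#\{k : \sigma_k(g(t)) \le \lambda\}$ is, for $\lambda$ in the gap, locally constant in $t$ — which follows from upper semicontinuity of eigenvalues from below and lower semicontinuity from above, both consequences of the min-max characterization recalled in Section~\ref{section:annulus} applied to the comparable metrics $g(t)$ — together with the fact, already available from the explicit $S^1$-invariant analysis in this paper, that in the cases of interest the spectrum is known very concretely. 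Once the contour is pinned down, the rest is the standard Kato perturbation-theoretic argument.
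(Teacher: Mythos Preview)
Your Riesz-projection/resolvent approach is correct and in fact yields the stronger conclusion of operator-norm convergence, but the paper takes a much more elementary and direct route. After noting (as you do) that $\dim\mathcal{E}(t)=m$ for small $|t|$ by Lipschitz continuity of the branches, it simply chooses an $L^2(\partial M, g(t))$-orthonormal basis of eigenfunctions $u_1(t),\dots,u_m(t)$ for $\mathcal{E}(t)$, invokes uniform elliptic boundary estimates to pass---along any sequence $t_i\to 0$---to a $C^2(M)$-convergent subsequence whose limits $u_1,\dots,u_m$ necessarily form an orthonormal basis of $\mathcal{E}(0)$, and then reads off convergence of $P_{t_i}f=\sum_j\langle f,u_j(t_i)\rangle\, u_j(t_i)$ directly from the finite-rank projection formula (the limit being independent of which orthonormal basis of $\mathcal{E}(0)$ arises). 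This avoids resolvents, contour integrals, Hilbert-space identifications, and Kato theory entirely, at the price of establishing only sequential strong convergence---which is all that Proposition~\ref{proposition:critical} requires. Your approach buys uniformity and would transfer cleanly to settings where eigenfunction compactness is less immediate. Two minor remarks on your write-up: the constant-boundary-length hypothesis you invoke is not part of the lemma's statement (and is not needed---your $\rho_t^{1/2}$ conjugation works regardless), and your closing appeal to the explicit $S^1$-invariant spectrum is superfluous and would undercut the generality of the lemma; the min-max argument you sketch just before it already controls the multiplicity inside $\gamma$.
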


\begin{proof}
Since $\sigma_k(g(t))$ is a Lipschitz function of $t$ for every $k \geq 1$, first we note that for $|t|<\epsilon$, 
$\sigma_{k_1}(g(t)) > \sigma_{k_1 -1}(g(t))$ and $\sigma_{k_m}(g(t)) < \sigma_{k_m +1}(g(t))$. Therefore, $\mathcal{E}(t)$ has dimension $m$ for $|t|<\epsilon$. Let $u_1(t), \ldots, u_m(t)$ be an orthonormal basis of $\mathcal{E}(t)$ with respect to $L^2(\partial M, g(t))$, such that $u_j(t)$ is a Steklov eigenfunction with eigenvalue $\sigma_{k_j}(g(t))$.  Elliptic boundary estimates give bounds on $u_j(t)$ up to $\partial M$ that are independent of $t$. Hence for any sequence $t_i \rightarrow 0$ there is a subsequence, which we continue to denote by $t_i$, such that $u_j(t_i)$ converges in $C^2(M)$ to a Steklov eigenfunction $u_j$ of $(M,g)$ with eigenvalue $\sigma_{k_j}(g)=\sigma$ (see \cite[Proof of Theorem 1.1]{FS4} for details). It follows that $u_1, \ldots , u_m$ is an orthonormal basis of $\mathcal{E}(0)$, and if $f(t) \in L^2(\partial M, g(t))$ is a family of functions varying smoothly in $t$  then
\[
       \lim_{i \rightarrow \infty} P_{t_i}(f(t))
       = \sum_{k=1}^m \left( \int_{\partial M} f(t_i) \, u_k(t_i) \right) u_k(t_i)
        \stackrel{i \rightarrow \infty}{\longrightarrow} 
        \sum_{k=1}^m \left( \int_{\partial M} f(0) \, u_k \right) u_k = P_0(f(0)).
\]
\end{proof}

Recall (see \cite[Lemma 2.2]{FS1}) that if $M$ is a compact surface with nonempty boundary and $u=(u_1, \ldots , u_n): M \rightarrow \mathbb{B}^n$ is a proper branched immersion, then $u(M)$ is a free boundary minimal surface in $\mathbb{B}^n$ (i.e. $u(M)$ is a minimal surface in $\mathbb{B}^n$ that meets $\partial \mathbb{B}^n$ orthogonally) if and only if $u_1, \ldots, u_n$ are Steklov eigenfunctions of $(M, g)$ with eigenvalue 1, where $g$ is the induced metric; i.e. $g=u^*(\delta)$ where $\delta$ is the Euclidean metric on $\mathbb{B}^n$.

In \cite[Proof of Proposition 2.4]{FS2} it was shown that if $(M,g)$ is a Riemannian surface with boundary such that the quadratic form $Q_h$ is indefinite on $E_k(g)$, then there there are $k$-th eigenfunctions $u_1, \ldots, u_n$, $n \geq 2$, such that $u=(u_1, \ldots, u_n): M \rightarrow \mathbb{B}^n$ is a proper conformal branched immersion, and hence $u(M)$ is a free boundary minimal surface in $\mathbb{B}^n$. Here we prove a converse of this. This is an analog of \cite[Lemma 3.1]{EI}.

\begin{lemma} \label{lemma:indefinite}
Let $M$ be a compact surface with nonempty boundary, and suppose $u=(u_1, \ldots, u_k): M \rightarrow \mathbb{B}^n$ is a (possibly branched) free boundary minimal immersion. Then, for all $h \in S_0^2(M, g)$, the quadratic form $Q_h$ is indefinite on $E_k(g)$, where $g=u^*(\delta)$ and $k$ is such that $\sigma_k(g)=1$ (i.e. $u_1, \ldots, u_n$ are Steklov eigenfunctions with eigenvalue $\sigma_k$).
\end{lemma}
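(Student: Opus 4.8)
The plan is to test the quadratic form $Q_h$ on the coordinate functions $u_1,\dots,u_n$ of the immersion itself and exploit the fact that, because $M$ is two–dimensional, the stress–energy tensors $\tau(u_i)$ add up to zero.

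First I would record the facts forced by the hypotheses. By the characterization recalled just above (\cite[Lemma 2.2]{FS1}), the components $u_1,\dots,u_n$ of a (possibly branched) free boundary minimal immersion $u\colon M\to\mathbb B^n$ are Steklov eigenfunctions of $(M,g)$ with eigenvalue $1$, so $u_i\in E_k(g)$ for the index $k$ with $\sigma_k(g)=1$; moreover $g=u^*(\delta)=\sum_{i=1}^n du_i\otimes du_i$ is the induced metric, and since $u$ sends $\partial M$ into $\partial\mathbb B^n$ we have $\sum_{i=1}^n u_i^2=|u|^2\equiv 1$ on $\partial M$. Next comes the key computation: taking the $g$–trace of $g=\sum_i du_i\otimes du_i$ and using $\dim M=2$ gives $\sum_i|\nabla u_i|^2=\mathrm{tr}_g g=2$ on the complement of the (finite) branch set, whence
\[
   \sum_{i=1}^n \tau(u_i)=\sum_{i=1}^n du_i\otimes du_i-\tfrac12\Big(\sum_{i=1}^n|\nabla u_i|^2\Big)g=g-g=0 .
\]
Summing the formula for $Q_h$ over $i$ and using $\sigma_k(g)=1$ together with $\sum_i u_i^2\equiv 1$ on $\partial M$,
\[
   \sum_{i=1}^n Q_h(u_i)
   =-\int_M\Big\langle\sum_{i=1}^n\tau(u_i),h\Big\rangle\,da-\tfrac12\int_{\partial M}\Big(\sum_{i=1}^n u_i^2\Big)h(T,T)\,ds
   =-\tfrac12\int_{\partial M}h(T,T)\,ds=0,
\]
the last equality being precisely the condition $h\in S_0^2(M,g)$.

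Finally I would conclude. Since $\sum_i u_i^2\equiv 1$ on $\partial M$, not all $u_i$ vanish identically, so $E_k(g)\setminus\{0\}$ contains at least one of them. If $Q_h$ were positive definite on $E_k(g)$ then $Q_h(u_i)\ge 0$ for every $i$ with strict inequality for a nonzero $u_i$, forcing $\sum_i Q_h(u_i)>0$ and contradicting the identity above; the same argument rules out negative definiteness. Hence $Q_h$ is indefinite on $E_k(g)$, i.e.
\[
   \min_{u\in E_k(g),\,\|u\|_{L^2(\partial M)}=1}Q_h(u)\ \le\ 0\ \le\ \max_{u\in E_k(g),\,\|u\|_{L^2(\partial M)}=1}Q_h(u),
\]
which is the assertion. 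I do not expect a genuine obstacle here: the entire argument rests on the two–dimensional identity $\sum_i\tau(u_i)=0$, and the only point needing a word of care is that $g$ degenerates at branch points — but this identity holds away from the finite branch set, which is all that is required for the integral identity, and the remaining definiteness bookkeeping is elementary.
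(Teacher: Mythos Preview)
Your argument is correct and coincides with the paper's own proof: both compute $\sum_i\tau(u_i)=0$ from the isometric immersion condition in dimension two, use $\sum_i u_i^2=1$ on $\partial M$ together with the constraint $\int_{\partial M}h(T,T)\,ds=0$ to obtain $\sum_i Q_h(u_i)=0$, and conclude indefiniteness. Your write-up adds a bit more detail on the final definiteness step and on branch points, but the proof is the same.
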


\begin{proof}
Since $g=u^*(\delta)=\sum_{i=1}^n du_i \otimes du_i$, $|\nabla u|^2=2$ and
\[
     \sum_{i=1}^n du_i \otimes du_i -\frac{1}{2} |\nabla u|^2 g=0.
\]
Since $u(\partial M) \subset \partial \mathbb{B}^n$, we have $\sum_{i=1}^n u_i^2=1$. Therefore,
\[
     \sum_{i=1}^n Q_h(u_i)=-\frac{\sigma_k(g)}{2} \int_{\partial M} h(T,T) \; ds =0
\]
if $h \in S_0^2(M, g)$. It follows that $Q_h$ is indefinite on $E_k(g)$ for all $h \in S_0^2(M, g)$.
\end{proof}

Using Lemma \ref{lemma:indefinite} we now show that the induced metric on any $S^1$-invariant free boundary minimal annulus or M\"obius band is critical for some Steklov eigenvalue.

\begin{proposition} \label{proposition:critical}
Let $M$ be the annulus or M\"obius band, and suppose $u=(u_1, \ldots, u_k): M \rightarrow \mathbb{B}^n$ is a free boundary branched minimal immersion such that $g:=u^*(\delta)$ is $S^1$-invariant. Then $g$ is critical for the $\sigma_k$ functional on the space of $S^1$-invariant metrics on $M$, for some $k \geq 1$.
\end{proposition}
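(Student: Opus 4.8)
The plan is to identify the free boundary condition with the Steklov eigenvalue $1$, and then to combine the indefiniteness of the quadratic form $Q_h$ from Lemma~\ref{lemma:indefinite} with the one-sided spectral perturbation facts recorded at the start of Section~\ref{section:critical}. First I would invoke \cite[Lemma 2.2]{FS1}: since $u$ is a proper branched minimal immersion meeting $\partial\mathbb{B}^n$ orthogonally with induced metric $g=u^*(\delta)$, its component functions are Steklov eigenfunctions of $(M,g)$ with eigenvalue $1$; in particular $1$ is a positive Steklov eigenvalue of $(M,g)$, and I would let $k$ be the \emph{smallest} index with $\sigma_k(g)=1$. Writing $m=\dim E_k(g)$, so that $\sigma_{k-1}(g)<\sigma_k(g)=\dots=\sigma_{k+m-1}(g)=1<\sigma_{k+m}(g)$, the goal is then to show that this particular $g$ is critical for $\sigma_k$ on the space of $S^1$-invariant metrics.

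To that end I would take an arbitrary smooth family $g(\tau)$ of $S^1$-invariant metrics on $M$ with $g(0)=g$ and fixed boundary length, and set $h=\dot g(0)$, which lies in $S_0^2(M,g)$ since the length constraint forces $\int_{\partial M}h(T,T)\,ds=0$. By the perturbation theory recalled at the start of Section~\ref{section:critical} together with \cite[Lemma 2.5]{FS2}, for $|\tau|$ small the $m$ Steklov eigenvalues of $(M,g(\tau))$ lying near $1$ are smooth functions $\Lambda_1(\tau),\dots,\Lambda_m(\tau)$ with $\Lambda_i(0)=1$, whose $\tau=0$ limiting eigenfunctions may be chosen to form an $L^2(\partial M,g)$-orthonormal basis $v_1,\dots,v_m$ of $E_k(g)$ diagonalizing $Q_h$, with $\dot\Lambda_i(0)=Q_h(v_i)$; hence the multiset $\{\dot\Lambda_i(0)\}_{i=1}^m$ is exactly the spectrum of $Q_h$ on $E_k(g)$ relative to the $L^2(\partial M,g)$ inner product. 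Since $k$ is the smallest index with eigenvalue $1$, Lipschitz continuity of the $\sigma_j$ (as used in Lemma~\ref{lemma:continuous}) gives $\sigma_k(g(\tau))=\min_{1\le i\le m}\Lambda_i(\tau)$ for $|\tau|$ small; and because the $\Lambda_i(0)$ all coincide, the one-sided derivatives of $\tau\mapsto\min_i\Lambda_i(\tau)$ at $\tau=0$ are
\[
  \dot\sigma_k(0^+)=\min_{1\le i\le m}\dot\Lambda_i(0)=\min\bigl\{\,Q_h(\phi):\ \phi\in E_k(g),\ \|\phi\|_{L^2(\partial M,g)}=1\,\bigr\},
\]
\[
  \dot\sigma_k(0^-)=\max_{1\le i\le m}\dot\Lambda_i(0)=\max\bigl\{\,Q_h(\phi):\ \phi\in E_k(g),\ \|\phi\|_{L^2(\partial M,g)}=1\,\bigr\}.
\]

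Finally, Lemma~\ref{lemma:indefinite} says $Q_h$ is indefinite on $E_k(g)$, so the minimum above is strictly negative and the maximum strictly positive; hence $\dot\sigma_k(0^-)\,\dot\sigma_k(0^+)<0$, and as $g(\tau)$ was arbitrary, $g$ is critical for $\sigma_k$ in the sense of Definition~\ref{definition:critical}. I expect the only genuinely delicate points to be two. First, one must take an \emph{extreme} index $k$ — the smallest, or symmetrically the largest, among the indices where the eigenvalue $1$ occurs — rather than an interior one: for an interior index $k'$ the one-sided derivatives of $\sigma_{k'}$ are intermediate eigenvalues of $Q_h|_{E_k(g)}$, whose product may be positive (e.g.\ for the critical catenoid, where $Q_h|_{E_k(g)}$ has a spectrum of the form $\{a,b,b\}$), so criticality at an interior index can fail. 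Second, the statement that the limiting eigenfunctions $v_i$ of the branches diagonalize $Q_h$ on $E_k(g)$, i.e.\ that $\{\dot\Lambda_i(0)\}$ realizes the full spectrum of $Q_h|_{E_k(g)}$, is first-order degenerate perturbation theory and is already implicit in the setup of Section~\ref{section:critical}. Apart from these bookkeeping points, the substance of the proof lies entirely in Lemma~\ref{lemma:indefinite}, which is already in hand.
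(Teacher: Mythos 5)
Your proof is essentially correct and follows the same basic strategy as the paper: identify the free boundary condition with Steklov eigenvalue~$1$ via \cite[Lemma 2.2]{FS1}, observe that the one-sided derivatives $\dot\sigma_{k_1}(0^{\pm})$ are the max and min of $\{\dot\Lambda_i(0)\}_{i=1}^m$, and combine with the indefiniteness from Lemma~\ref{lemma:indefinite}. You also correctly identify the key bookkeeping point that one must take an extreme index $k_1$ (or $k_m$) rather than an interior one, which the paper handles implicitly by proving criticality of both $\sigma_{k_1}$ and $\sigma_{k_m}$.

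The one place you overreach is the final step. Lemma~\ref{lemma:indefinite} gives $\sum_i Q_h(u_i)=0$, from which one can conclude that $Q_h|_{E_k(g)}$ is \emph{not definite}: its extremal eigenvalues satisfy $\min Q_h\le 0\le \max Q_h$, and that is all. It does \emph{not} follow that the minimum is strictly negative and the maximum strictly positive; for a particular variation $h\in S^2_0(M,g)$ one could in principle have $Q_h|_{E_k(g)}\equiv 0$, in which case both one-sided derivatives vanish. Your concluding strict inequality $\dot\sigma_k(0^-)\dot\sigma_k(0^+)<0$ should therefore be the non-strict $\dot\sigma_k(0^-)\dot\sigma_k(0^+)\le 0$ --- which is exactly what Definition~\ref{definition:critical} requires, so the proposition is still proved; you just claimed more than the argument delivers.

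There is also a stylistic difference from the paper's proof worth noting: you invoke directly that the derivatives $\{\dot\Lambda_i(0)\}$ realize the full spectrum of $Q_h|_{E_k(g)}$ (first-order degenerate perturbation theory), and read off the one-sided derivatives as the extreme eigenvalues. The paper instead works more hands-on: it extracts from indefiniteness a single vector $u\in E_k(g)$ with $Q_h(u)=0$, tracks it through $\mathcal{E}(t)$ using the continuity Lemma~\ref{lemma:continuous}, and deduces that the limiting convex combination $\sum a_j(0)^2\dot\sigma_{k_{i_j}}(0^+)$ vanishes, forcing $\dot\sigma_{k_1}(0^+)\le 0\le\dot\sigma_{k_m}(0^+)$ (and the reverse on the left). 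Your route is shorter but leans on an explicit diagonalization statement that the paper does not state and that is most transparently justified in the $S^1$-invariant setting (where the Fourier decomposition diagonalizes $Q_h$); the paper's route avoids asserting this and only needs continuity of the spectral projection, which is proved in Lemma~\ref{lemma:continuous}. Both are valid, but the paper's version is a bit more self-contained.
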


\begin{proof}
Since $u=(u_1, \ldots, u_n): M \rightarrow \mathbb{B}^n$ is a free boundary minimal immersion,  $u_1, \ldots, u_n$ are Steklov eigenfunctions of $(M,g)$ with eigenvalue 1. Suppose the eigenspace corresponding to the eigenvalue 1 has mulitplicity $m$, and let $k_1< \cdots < k_m$ be such that $\sigma_{k_1}(g)= \cdots = \sigma_{k_m}(g)=1$.

Let $g(t)$ be a smooth family of $S^1$-invariant metrics on $M$ with fixed boundary length with $g(t)=g$, and let $h=\dot{g} \in S^2_0(M,g)$. Then there are smooth functions $\Lambda_1(t), \dots, \Lambda_m(t)$ such that each $\Lambda_i(t)$, $i=1, \ldots, m$, is an eigenvalue of $(M,g(t))$ and $\Lambda_1(0)= \cdots = \Lambda_m(0)=1$. Furthermore:
\begin{align*}
      \dot{\sigma_{k_1}}(0^-) &= \max \{ \dot{\Lambda}_1(0), \ldots \dot{\Lambda}_m(0) \}  \\
      \dot{\sigma_{k_1}}(0^+) &= \min \{ \dot{\Lambda}_1(0), \ldots \dot{\Lambda}_m(0) \}
\end{align*}
and
\begin{align*}
      \dot{\sigma}_{k_m}(0^+) &= \max \{ \dot{\Lambda}_1(0), \ldots \dot{\Lambda}_m(0) \} \\
      \dot{\sigma}_{k_m}(0^-) &= \min \{ \dot{\Lambda}_1(0), \ldots \dot{\Lambda}_m(0) \}.
\end{align*}

\vspace{1mm}
As in the proof of Lemma \ref{lemma:continuous}, let $\mathcal{E}(t)$ be the direct sum of eigenspaces corresponding to the distinct eigenvalues among $\Lambda_1(t), \ldots, \Lambda_m(t)$. It follows from Lemma \ref{lemma:indefinite} that $Q_h$ is indefinite on $\mathcal{E}(0)$. Therefore, there exists $u \in \mathcal{E}(0)$ such that $Q_h(u)=0$. Let $u(t) \in \mathcal{E}(t)$ be a family of functions varying smoothly in $t$ such that $\lim_{t \rightarrow 0} u(t)=u$. Then $u(t)=a_1 (t) u_1(t) + \cdots + a_m(t) u_s(t)$ where $u_1(t) , \ldots, u_s(t)$ are eigenfunctions corresponding to distinct eigenvalues among $\Lambda_1(t), \ldots, \Lambda_m(t)$ with $\| u_j(t) \|_{L^2(\partial M, g(t))}=1$ and $a_j(t) \in \mathbb{R}$ for $j=1, \ldots, s$.
It follows from Lemma \ref{lemma:continuous} that $Q_h$ is continuous on $\mathcal{E}(t)$ at $t=0$, and so
\begin{align*}
     0 & = Q_h(u) = \lim_{t \rightarrow 0^+} Q_h(u(t)) \\
        & = \lim_{t \rightarrow 0^+} \left( a_1(t)^2 Q_h(u_1(t)) + \cdots + a_s(t)^2Q_h(u_s(t)) \right) \\
        & = \lim_{t \rightarrow 0^+} ( a_1(t)^2 \dot{\sigma}_{k_{i_1}}(t) + \cdots + a_s(t)^2 \dot{\sigma}_{k_{i_s}}(t) ) \\
        & =  a_1(0)^2 \dot{\sigma}_{k_{i_1}}(0^+) + \cdots + a_s(0)^2 \dot{\sigma}_{k_{i_s}}(0^+) 
\end{align*}
for some $1 \leq i_1 <  \cdots < i_s \leq m$, and we must have $\dot{\sigma}_{k_1}(0^+) \leq 0$ and $\dot{\sigma}_{k_m}(0^+) \geq 0$. Similarly, we must have $\dot{\sigma}_{k_1}(0^-) \geq 0$ and $\dot{\sigma}_{k_m}(0^-)\leq 0$. Therefore, $g$ is a critical metric of both $\sigma_{k_1}$ and $\sigma_{k_m}$ on the space of $S^1$-invariant metrics on $M$.
\end{proof}

Combining Proposition \ref{proposition:critical} and Theorems \ref{theorem:critical} and \ref{theorem:critical-annulus}, we obtain a classification of all $S^1$-invariant free boundary annuli and M\"obius bands in $\mathbb{B}^n$.

\begin{customthm}{\ref{theorem:classification}}
The only $S^1$-invariant free boundary minimal annuli and M\"obius bands in $\mathbb{B}^n$ are those given explicitly in Theorem \ref{theorem:critical} and Theorem \ref{theorem:critical-annulus}.
\end{customthm}

\bibliographystyle{plain}

\begin{thebibliography}{KNPP}
 
\bibitem[D]{D} M.~E. de Oliveira, Some new examples of nonorientable minimal surfaces, 
                {\em Proc. Am. Math. Soc.} {\bf 98} (1986) 629--636.     
\bibitem[EI]{EI}  A. El Soufi, S. Ilias, Laplacian eigenvalue functionals and metric deformations on 
                 compact manifolds, {\em J. Geom. Phys.} {\bf 58} (2008), no.1, 89--104.    
\bibitem[FPZ]{FPZ} A. Folha, F. Pacard, T. Zolotareva, Free boundary minimal surfaces in the unit 3-ball,
                {\em Manuscripta Math.} {\bf 154} (2017), no. 3-4, 359--409.                                                   
\bibitem[FTY]{FTY} X.-Q.\ Fan, L.\-F.\ Tam and G.\ Yu, Extremal problems for Steklov eigenvalues on annuli, 
              {\em Calc.\ Var.\ Partial Differential Equations} \textbf{54} (2015), no. 1, 1043-1059.
\bibitem[FS1]{FS1} A. Fraser, R. Schoen, The first Steklov eigenvalue, conformal geometry, and    
                 minimal surfaces, {\em Adv. Math.} {\bf 226} (2011), {\em no. 5}, 4011--4030.     
\bibitem[FS2]{FS2} A. Fraser, R. Schoen, Minimal surfaces and eigenvalue problems, {\em Geometric analysis,
                mathematical relativity, and nonlinear partial differential equations}, 105--121, Contemp. Math., 599, 
                {\em Amer. Math. Soc., Providence, RI}, 2013.                                      
\bibitem[FS3]{FS3} A. Fraser, R. Schoen, Sharp eigenvalue bounds and minimal surfaces in the ball,
                   {\em Invent. Math.} {\bf 203} (2016), no. 3, 823--890. 
\bibitem[FS4]{FS4} A. Fraser, R. Schoen, Some results on higher eigenvalue optimization, arXiv:1910..                   
\bibitem[GP]{GP} A. Girouard, I. Polterovich, On the Hersch-Payne-Schiffer estimates for the 
              eigenvalues of the Steklov problem, {\em Funct. Anal. Appl.} {\bf 44} (2010), no. 2, 106--117.
\bibitem[H]{H} J. Hersch, Quatre propri\'et\'es isop\'erim\'etriqes de membranes   
                   sph\'eriques homog\`enes,
                   {\em C.R. Acad. Sci. Paris S\'er. A-B} {\bf 270} (1970), A1645--A1648.              
\bibitem[HPS]{HPS} J. Hersch, L.~E. Payne, M.~M. Schiffer, Some inequalities for Stekloff eigenvalues,
               {\em Arch. Rational Mech. Anal.} {\bf 57} (1975), 99--114.   
\bibitem[KL]{KL} N. Kapouleas, M. Li, Free boundary minimal surfaces in the unit three-ball via desingularization
                       of the critical catenoid and the equatorial disk, arXiv:1709.08556.                  
\bibitem[KW]{KW} N. Kapouleas, D. Wiygul, Free-boundary minimal surfaces with connected boundary in 
              the 3-ball by tripling the equatorial disc, arXiv:1711.00818.                              
\bibitem[Ka]{Ka} M. Karpukhin, Index of minimal spheres and isoperimetric eigenvalue inequalities, 
               arXiv:1905.03174.            
\bibitem[KNPP]{KNPP} M. Karpukhin, N. Nadirashvili, A. Penskoi , I. Polterovich, An isoperimetric inequality 
             for Laplace eigenvalues on the sphere, arXiv:1706.05713.
\bibitem[Ke]{Ke} D. Ketover, Free boundary minimal surfaces of unbounded genus, arXiv:1612.08691.             
\bibitem[M]{M} P. Mira, Complete minimal M\"obius strips in $\mathbb{R}^n$ and the Bj\"orling problem, 
              {\em J. Geom. Phys.} {\bf 56} (2006), no. 9, 1506--1515. 
\bibitem[N]{N} N. Nadirashvili, Isoperimetric inequality for the second eigenvalue of a sphere, 
             {\em J. Differential Geom.}, {\bf 61}:2 (2002), 335--340.
\bibitem[NS]{NS} N. Nadirashvili, Y. Sire, Isoperimetric inequality for the third eigenvalue of the Laplace-Beltrami 
                operator on $S^2$, {\em J. Differential Geometry}, {\bf 107}:3 (2017), 561--571.            
\bibitem[P]{P} R. Petrides, Maximization of the second conformal eigenvalue of spheres, {\em Proc. Amer. Math. 
                 Soc.}, {\bf 142}:7 (2014), 2385--2394.                
\bibitem[Sz]{Sz} G. Szeg\"{o}, Inequalities for certain eigenvalues of a membrane of given area,
                  {\em J. Rational Mech. Anal.} {\bf 3} (1954), 343-356.              
\bibitem[W]{W} R. Weinstock, Inequalities for a classical eigenvalue problem,
                  {\em J. Rational Mech. Anal.} {\bf 3} (1954), 745--753.   


\end{thebibliography}

\end{document}